\pdfoutput=1
\documentclass{egpubl}
\SpecialIssueSubmission
\newif\ifpaperArxiv
\paperArxivtrue
\usepackage[T1]{fontenc}
\usepackage{dfadobe}
\usepackage{cite}
\BibtexOrBiblatex
\electronicVersion
\PrintedOrElectronic
\usepackage{graphicx,egweblnk}

\usepackage{amsmath,amssymb,amsthm,mathtools,cancel}
\usepackage{microtype,booktabs,array,enumitem,xcolor,listings,needspace}
\newif\ifshownotes
\shownotesfalse

\usepackage{bookmark}
\hypersetup{unicode=true,colorlinks=true,linkcolor=blue!45!black,citecolor=blue!45!black,urlcolor=blue!45!black,pdfsubject={Knupp's conjecture and quartic reduction of boundary Jacobian minimization}}
\setlist[itemize]{topsep=3pt,itemsep=2pt,parsep=0pt,leftmargin=1.5em}
\setlist[enumerate]{topsep=3pt,itemsep=3pt,parsep=0pt,leftmargin=1.8em}
\allowdisplaybreaks[1]
\newtheoremstyle{exposition}{6pt}{6pt}{\normalfont}{}{\bfseries}{.}{.5em}{}
\theoremstyle{exposition}
\newtheorem{theorem}{Theorem}

\newtheorem{lemma}{Lemma}
\newcommand{\restatedlemmaname}{}
\newtheorem*{restatedlemma}{\restatedlemmaname}
\newtheorem{sublemma}{Sub-lemma}[lemma]

\newtheorem{derivation}{Derivation}

\makeatletter
\patchcmd{\SFB@ssect}{\edef}{\protected@edef}{}{}
\patchcmd{\SFB@ssect}{\edef}{\protected@edef}{}{}
\patchcmd{\SFB@ssect}{\edef}{\protected@edef}{}{}
\patchcmd{\SFB@ssect}{\edef}{\protected@edef}{}{}
\patchcmd{\SFB@ssect}{\edef}{\protected@edef}{}{}
\renewcommand{\tableofcontents}{%
\section*{Contents}%
\begingroup\small\@starttoc{toc}\endgroup\medskip}
\EGlocalpagenumber
\patchcmd{\@oddhead}{\@shortauthor\ / }{}{}{}
\patchcmd{\@evenhead}{\@shortauthor\ / }{}{}{}
\makeatother
\title{Validating Hexahedra through their Boundaries}
\author[]{Paul Zhang}

\ifpaperArxiv
\hypersetup{pdfkeywords={}}
\copyrightTextTitPag{}
\copyrightTextRunPag{}
\makeatletter
\def\ps@titlepage{\let\@mkboth\@gobbletwo
\def\@oddhead{}%
\def\@oddfoot{}%
\let\@evenhead=\@oddhead
\let\@evenfoot=\@oddfoot
\let\sectionmark=\EmptySectionmark
\let\subsectionmark=\EmptySubsectionmark
}
\makeatother
\else
\copyrightTextTitPag{Draft prepared using the SGP 2026 template.}
\copyrightTextRunPag{Draft prepared using the SGP 2026 template.}
\fi

\begin{document}
\maketitle
\begin{abstract}
Trilinear hexahedral mesh elements are used in finite element analysis to simulate volumetric phenomena. Realism of the simulation mandates that mesh elements through the trilinear map maintain positive volume, or mathematically, that the trilinear map maintains positive Jacobian determinant (jacdet).
While jacdet positivity generally needs to be verified in the full element volume, we prove
Knupp's conjecture which posits that for trilinear hexahedra, jacdet positivity on the boundary of the element is sufficient to guarantee positivity in the entire element.
We further prove that for any valid hexahedron, its globally minimal jacdet must reside on the boundary. Lastly, we prove that the globally minimal jacdet on the boundary of a hexahedron, valid or not, must be achieved within a finite set of candidate points that can be determined by quartic root finding. Combining these results, we can algorithmically validate a hexahedron through evaluation of its jacdet on a finite set of boundary points.

\end{abstract}

\section{Introduction}\label{sec:introduction}
Hexahedral meshes discretize three-dimensional domains for numerical analysis and physical simulation, including finite element analysis (FEA)~\cite{ciarlet2002,zlamal1968}.
Hexahedral meshes also support the construction of volumetric spline spaces for isogeometric analysis (IGA)~\cite{hughes2005,wei2018}.
Generating and processing such meshes remains an active research area.
For a survey on the field of hexahedral meshing, see~\cite{pietroni2023}.
We focus on the problem of determining validity of individual trilinear hexahedral elements.

Element validity is necessary for these meshes to serve as simulation domains. A trilinear element maps a reference cube into physical space, and the Jacobian determinant (jacdet) measures the signed local volume change. With the reference orientation fixed, a zero determinant indicates local collapse, while a negative determinant indicates local inversion. In a deforming solid, both violate the requirement that material retain a locally nonsingular, orientation-preserving configuration. The validity criterion is therefore strict positivity of the determinant throughout the reference cube. Unfortunately, corner samples, or positivity along all twelve edges, do not suffice to establish this condition~\cite{knupp1990}.

Existing certification methods establish positivity over the whole element volumetrically, using, for example, recursive bounding~\cite{johnen2013,johnen2017,knabner2003} or sum-of-squares polynomial optimization~\cite{marschner2020}.
While these methods invariably treat the problem as volumetric, we can take inspiration from the planar bilinear quadrilateral case. For a planar bilinear quadrilateral, positive jacdet at the four corners of the bilinear map guarantees positivity throughout the element~\cite[p.~315, Eq.~(22)]{knupp1990}. This remarkable result simplifies the problem of planar bilinear quad validity into jacdet evaluation over just a finite set of boundary points. Following this line of thought, it would be equally remarkable if a purely boundary-based criterion can be established for the trilinear hexahedron. Knupp conjectures exactly this with the face-test statement~\cite[p.~322]{knupp1990}:
\begin{quote}
``if \(J\) is positive on each of the six faces of the hexahedron, then \(J\) is positive everywhere in the interior.''
\end{quote}
We resolve this conjecture and derive an algorithm for testing trilinear hex validity through evaluation of the jacdet at a finite set of boundary points. To achieve this outcome we take the following steps: 
\begin{itemize}
\item prove Knupp's conjecture establishing that strict positivity of the jacdet on the six faces implies strict positivity throughout the element.
\item prove the stronger condition that, for a valid trilinear hexahedron, the global minimum jacdet is attained on the boundary.
\item prove that the boundary minimum, for a valid or invalid element, is attained at a finite set of candidate points derived from roots of polynomials of degree at most four.
\item combine these results into a boundary-based hexahedron validity algorithm
\end{itemize}

\section{Related work}\label{sec:related-work}

\paragraph*{Hexahedral mesh generation and analysis.}
Reference-domain maps have long been used to generate finite element meshes, including through blending-function interpolation~\cite{gordon1973}. Modern hex-generation approaches include grid-based methods, polycube constructions, and frame-field-guided parameterizations. Examples include interactive cuboid decomposition~\cite{li2021} and volume quantization that allows the singularity structure to simplify during mesh generation~\cite{bruckler2026}. Hex meshes also provide domains for spline constructions used in IGA~\cite{wei2018}. For a survey on these approaches together with connectivity editing and mesh optimization, see~\cite{pietroni2023}.

\paragraph*{Validity tests, Jacobian bounds, and subdivision.}
\cite{knupp1990}'s analysis of isoparametric-map invertibility demonstrates the failure of both corner and edge positivity tests and proposes the face-test conjecture. A subsequent positivity test recursively reduces a three-variable Jacobian-positivity problem to two-variable and then one-variable polynomial tests, using piecewise-linear tangent bounds on quadratics~\cite{knabner2003}.
Increasing the tangent resolution strengthens these sufficient conditions. The test certifies positivity rather than computing the minimum jacdet.

Bernstein--B\'ezier methods likewise certify positivity through bounds, but refine those bounds by adaptively subdividing the reference domain~\cite{johnen2013}. The same work computes the minimum for quadratic triangles directly by checking stationary points and the boundary. A specialization to linear hexahedra exploits the structure of the determinant polynomial to construct B\'ezier coefficients efficiently and refine the bounds when needed~\cite{johnen2017}. 

\paragraph*{Polynomial optimization and mesh repair.}
Jacobian bounds can guide vertex updates that untangle curvilinear meshes~\cite{toulorge2013}. For trilinear hexes, sum-of-squares (SOS) and moment relaxations formulate Jacobian minimization over the reference cube as polynomial optimization and use the resulting minimum locations in mesh repair~\cite{marschner2020}. These relaxations belong to the broader theory of polynomial positivity and moment optimization~\cite{putinar1993,lasserre2001}. Subsequent geometry-processing work applies SOS to queries on higher-order primitives~\cite{marschner2021}; follow-up work accelerates SOS collision detection for curved shapes and paths~\cite{zhang2023}. Such acceleration techniques may also inform future SOS-based validation and repair methods. Our result supplies a validity test and boundary minimum for a fixed trilinear element; a mesh-repair update is a separate task.

\paragraph*{Low-degree polynomial root finding.} %
Our approach leads to finding roots of polynomial equations of degree at most four, whose solution is classical~\cite[\S1.11(iii)]{dlmf}.
Efficient numerical quartic solvers are also available~\cite{orellana2020}.
We use these methods in our final algorithm. 

\Needspace{8\baselineskip}
\section{Preliminaries}\label{sec:preliminaries}\label{sec-setup-title}

Let \(Q=[0,1]^3\), with reference coordinates \((x,y,z)\).
Let
\begin{equation}
F(x,y,z):Q\longrightarrow\mathbb R^3\label{web-equation-01}
\end{equation}
be the map of a trilinear hexahedron; It is affine in each reference coordinate \((x,y,z)\) separately.
The image \(\mathcal H:=F(Q)\) is the hexahedral mesh element.
Define the derivative vectors
\begin{equation}
U=\frac{\partial F}{\partial x},\qquad
V=\frac{\partial F}{\partial y},\qquad
W=\frac{\partial F}{\partial z}.
\label{eq:derivative-vectors}
\end{equation}
The Jacobian matrix and its determinant are
\begin{equation}
DF=\left[U\mid V\mid W\right],\qquad J=\det(DF),\label{web-equation-03}
\end{equation}
where the brackets denote horizontal concatenation of column vectors.
We say that the hexahedron \(\mathcal H\), with map \(F\), is \emph{valid} if and only if \(J(p)>0\;\forall p\in Q\).

\section{Positivity and minimization of the Jacobian determinant}\label{sec:theoretical-results}
We start by formalizing the conjecture as \autoref{thm:knupp-conjecture}.
\begin{theorem}[Knupp's conjecture]\label{thm:knupp-conjecture}
Following the notation of \autoref{sec:preliminaries},
if \(J>0\) on \(\partial Q\), then
\begin{equation}
J(p)>0\qquad \forall p\in Q.
\label{eq:main-theorem}
\end{equation}
Thus hexahedron \(\mathcal H\) is valid.
\end{theorem}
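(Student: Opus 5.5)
The plan is to argue by contradiction through a global minimizer of \(J\) over \(Q\), using the special multi-affine structure of the trilinear map. Suppose \(J>0\) on \(\partial Q\) but \(J(p)\le 0\) for some \(p\in Q\). Since \(J\) is a polynomial and \(Q\) is compact, \(J\) attains its minimum \(m\le 0\) at some \(p^*\). Because \(J>0\) on \(\partial Q\), the point \(p^*\) lies in the interior. Hence \(\nabla J(p^*)=0\) and the Hessian \(\nabla^2 J(p^*)\) is positive semidefinite.

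The first step is to record the structure of \(J\) along coordinate lines. Since \(F\) is affine in each variable, \(U=F_x\) does not depend on \(x\), while \(V\) and \(W\) are affine in \(x\). Therefore \(J\) is a polynomial of degree at most two in \(x\) when \(y,z\) are fixed, and symmetrically in \(y\) and in \(z\). Differentiating with \(F_{xx}=0\) gives
\begin{equation*}
J_{xx}=2\det(F_x,F_{xy},F_{xz}),\quad J_{yy}=2\det(F_{xy},F_y,F_{yz}),\quad J_{zz}=2\det(F_{xz},F_{yz},F_z).
\end{equation*}
There are analogous expressions for the mixed partials and for \(J_x,J_y,J_z\) as sums of determinants involving \(F_x,F_y,F_z,F_{xy},F_{xz},F_{yz}\) and \(F_{xyz}\).

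On the line through \(p^*\) parallel to the \(x\)-axis, \(J\) is a quadratic that is positive at the two endpoints on \(\partial Q\) and \(\le 0\) at \(p^*\). So this quadratic is strictly convex, and \(J_{xx}(p^*)>0\). The same argument gives \(J_{yy}(p^*)>0\) and \(J_{zz}(p^*)>0\). The conditions at \(p^*\) are therefore: \(J(p^*)\le 0\), \(\nabla J(p^*)=0\), all three diagonal second derivatives positive, and the Hessian positive semidefinite. I would express all of these at \(p^*\) in terms of the frame \(a=F_x,\ b=F_y,\ c=F_z\) and the second-order vectors \(F_{xy},F_{xz},F_{yz}\).

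The central step is an algebraic identity or inequality showing that these conditions are incompatible. If \(J(p^*)<0\), the vectors \(a,b,c\) form a (negatively oriented) basis. I would expand \(F_{xy},F_{xz},F_{yz}\) in this basis and rewrite \(\nabla J=0\) and the Hessian entries as polynomial relations among the coefficients. I would then aim to show that a suitable combination, such as a principal minor or the determinant of the Hessian, has sign forced by \(\operatorname{sign} J\), contradicting positive semidefiniteness.

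The degenerate case \(J(p^*)=0\) would be handled by perturbation. Replace \(F\) by \(F+\varepsilon G\) with \(G\) the identity map. This keeps \(J>0\) on \(\partial Q\) for small \(\varepsilon\), while a nearby interior point still has \(J\) nonpositive if the minimum is attained with \(J\) crossing zero. If instead the minimum is exactly \(0\) and is not crossed, I would show that an interior zero minimum still violates the identity obtained in the strict case, via a limiting argument.

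I expect this contradiction step to be the main obstacle. The coordinate-line convexity alone does not suffice, since Knupp's edge counterexamples show that positivity along lower-dimensional sets can fail. The trilinear coupling through \(F_{xyz}\) must be used genuinely.

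If the direct Hessian argument stalls, the fallback is to restrict \(J\) to the face-parallel slice \(z=z^*\) through \(p^*\). On that slice \(J\) is biquadratic in \((x,y)\) and positive on the slice boundary. I would then try to prove a planar lemma in the spirit of the bilinear-quad result of \cite{knupp1990}: such slice determinants cannot have an interior nonpositive minimum without forcing negativity on one of the faces \(z=0\) or \(z=1\). The argument would track how the slice minimum varies as \(z\) moves, using that \(J\) is quadratic in \(z\).
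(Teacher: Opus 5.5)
\textbf{There is a genuine gap.} Your setup is correct up to the list of conditions at \(p^*\). The ``central step'' fails, however: those conditions are \emph{not} incompatible. No identity or inequality among \(F_x,F_y,F_z,F_{xy},F_{xz},F_{yz},F_{xyz}\) at the single point \(p^*\) can yield the contradiction.

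The paper's own non-strict example shows this. Take \(F=(x(1-y-z),\,y(1-x-z),\,z(1-x-y))\), for which \(J=(1-x-y-z)^2-4xyz\). At the centre \(c=(1/2,1/2,1/2)\):
\begin{itemize}
\item \(J(c)=-1/4\);
\item \(J_x=-2(1-x-y-z)-4yz\) vanishes at \(c\), and symmetrically for \(J_y,J_z\);
\item \(J_{xx}=2\) and \(J_{xy}=2-4z\) vanishes at \(c\), so \(\nabla^2J(c)=2I\).
\end{itemize}
Applying AM--GM to \(xyz\) at fixed \(x+y+z\) shows that \(c\) is even the global minimizer of \(J\) over \(Q\). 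So every condition you extract holds there: interior minimizer, zero gradient, positive semidefinite Hessian, strictly positive coordinate-line curvatures. This map also has \(F_{xyz}=0\), so the trilinear coupling cannot be what rescues a pointwise argument.

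The only hypothesis that fails in this example is \emph{strict} positivity on \(\partial Q\). Your plan uses that hypothesis only to place \(p^*\) in the interior and to get \(J_{ii}(p^*)>0\). The paper's determinant identities for the boundary-minimum theorem, \(J(p)q_1=-\det([U\mid A\mid W])^2-s_1s_2\) and its cyclic versions, show the same thing. They force \(J(p)q_i\le 0\) for some \(i\), which is fully consistent with \(J(p)<0<q_i\). Your ``limiting argument'' for the case \(J(p^*)=0\) relies on the nonexistent strict-case identity, so it falls too. The slice fallback essentially restates the theorem on slices without giving a mechanism.

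\textbf{What is missing is a way to use boundary positivity non-locally.} The paper does this through coordinate squares through an interior point, whose perimeters lie on \(\partial Q\). When such a square has a planar image, \(J\) factors on it as an affine function times a bilinear one, so positivity on the perimeter propagates to the whole square. The proof then proceeds in three steps.
\begin{enumerate}
\item \(J=0\) together with \(\nabla J=0\) forces a planar coordinate square through the point. Hence the zero set is a smooth closed surface.
\item On a component \(S\) of the zero set, no two columns of \(DF\) are dependent and \(\kappa=\det([V\mid W\mid C])\ne0\). So wherever \(J_y=J_z=0\) on \(S\), one has \(J_x=-2\alpha\beta\kappa\), with \(\alpha,\beta,\kappa\) continuous, nonvanishing, and therefore of fixed sign on \(S\).
\item This contradicts the opposite signs of \(J_x\) forced at the \(x\)-extreme points of \(S\).
\end{enumerate}
This argument never examines a minimizer of \(J\). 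It works on the zero set and on slice perimeters, which is where strict positivity on all of \(\partial Q\) enters.
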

Note that the condition of \autoref{thm:knupp-conjecture} is \emph{strict} boundary positivity. We first show by counterexample in \autoref{sec:non-strict-counterexample} that a non-strict variant of this theorem is false. We then prove the strict statement \autoref{thm:knupp-conjecture} with the following strategy. We will presume in \autoref{sec-positivity-title} that there are locations in \(Q\) with \(J=0\). We will use the geometry of this zero set to obtain \autoref{opposite-derivative-signs}, then derive the opposite inequality in \autoref{same-derivative-sign} to arrive at a contradiction. Since \(J\) is strictly positive on the boundary, and has no zeros on the interior, by continuity, it must remain strictly positive.

\subsection{Counterexample to non-strict boundary positivity}\label{sec:non-strict-counterexample}
Strict boundary positivity in Knupp's conjecture cannot be weakened to nonnegativity. On \(Q=[0,1]^3\), consider the trilinear map
\begin{equation}
F(x,y,z)=
\begin{pmatrix}
x(1-y-z)\\
y(1-x-z)\\
z(1-x-y)
\end{pmatrix}.
\label{eq:non-strict-map}
\end{equation}
Direct differentiation gives
\begin{equation}
J(x,y,z)=\det(DF(x,y,z))=(1-x-y-z)^2-4xyz.
\label{eq:non-strict-jacobian}
\end{equation}
On the two faces perpendicular to the \(x\)-axis,
\begin{equation}
\begin{aligned}
J(0,y,z)&=(1-y-z)^2\geq0,\\
J(1,y,z)&=(y-z)^2\geq0.
\end{aligned}
\label{eq:non-strict-faces}
\end{equation}
Nonnegativity on the other four faces follows by permuting \(x,y,z\), which $J$ is invariant to. Nevertheless, at the cube's center,
\begin{equation}
J(1/2,1/2,1/2)=-1/4<0.
\label{eq:non-strict-center}
\end{equation}
Thus \(J\geq0\) on \(\partial Q\) does not imply \(J\geq0\) in \(Q\).

\subsection{Proof of Knupp's conjecture}\label{sec-positivity-title}\label{sec:knupp-proof}
We begin our proof of \autoref{thm:knupp-conjecture} here. 
Our strategy will be to rule out every zero of \(J\) in \(Q\). This is enough to prove positivity: if \(J\) were negative at an interior point, continuity along a line segment from that point to the boundary would force \(J\) to vanish somewhere along that segment.

Suppose, for a contradiction, that the set
$Z=\{p\in Q:J(p)=0\}$
is nonempty. Boundary positivity guarantees that every point of \(Z\) lies in the strict interior of \(Q\).
We establish some regularity conditions on \(Z\) to make it easier to reason about.

\begin{lemma}[{Each connected component of \(Z\) is a compact smooth surface enclosing a bounded region}]\label{lemma-1}

Under the boundary-positivity assumption, \(p\in Z \rightarrow \nabla J(p)\ne0\). Consequently, each connected component \(S\) of \(Z\) is a compact, smooth, two-dimensional embedded surface without boundary. The surface \(S\) is the boundary of a bounded three-dimensional region \(\Omega\subset\mathbb R^3\).

\end{lemma}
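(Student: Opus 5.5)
The plan is to prove the regularity claim $\nabla J\ne0$ on $Z$ first, and then get the topological conclusions from standard differential topology.

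\textbf{Reduction to one-variable quadratics.} Suppose, for contradiction, that $p=(x_0,y_0,z_0)\in Z$ has $\nabla J(p)=0$. Boundary positivity puts $p$ in the open cube. Since $F$ is affine in each reference coordinate, $U$ does not depend on $x$, while $V$ and $W$ are affine in $x$. Hence $J$ has degree at most two in each variable separately.

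Restrict $J$ to the coordinate line $t\mapsto p+t e_x$. This gives a polynomial of degree at most two that vanishes to second order at $t=0$, so it equals $a_x t^2$. The line meets the faces $x=0$ and $x=1$, where $J>0$, so $a_x>0$. The same holds for $e_y$ and $e_z$. Thus the Hessian $H=\nabla^2J(p)$ has strictly positive diagonal.

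\textbf{Main obstacle: showing $p$ cannot be a degenerate zero.} Positive diagonal does not yet contradict anything. Any non-coordinate direction cuts only the degree in $t$, not the boundary contact, so I would work in the slice planes and use the determinant structure directly.

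At a zero, $DF(p)$ is singular. Generically it has rank two, so $\operatorname{adj}DF(p)=c\,n m^{T}$ with $c\ne0$, where $DF(p)\,n=0$ and $m^{T}DF(p)=0$. Jacobi's formula then gives
\[
\partial_i J(p)=c\,m^{T}\bigl(\partial_i DF\bigr)(p)\,n .
\]
The entries $\partial_i DF$ are built only from the mixed derivatives $F_{xy},F_{xz},F_{yz}$ and $F_{xyz}$, because $F$ is multilinear. I would write out the three equations $m^{T}(\partial_i DF)\,n=0$ explicitly.

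Next, restrict $J$ to the planes through $p$ that contain the kernel direction $n$ and a coordinate axis. Along these planes I would track the sign of the biquadratic slice to produce a point where $J<0$ that can be joined to a face without crossing $Z$, which would contradict boundary positivity. This is the step I expect to be hardest. If it fails, I would fall back on a perturbation argument: replace $F$ by $F+\varepsilon G$ with a trilinear $G$, keep the faces positive for small $\varepsilon$, and use the count of sign changes on the coordinate lines.

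The rank-one case $\operatorname{rank}DF(p)\le1$ needs separate treatment. There $\operatorname{adj}DF(p)=0$, so $\nabla J(p)=0$ automatically. I would rule it out by showing that $J$ restricted to some coordinate line through $p$ then vanishes to order greater than two, hence identically, which contradicts positivity where that line meets the boundary.

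\textbf{Topological conclusions.} Once $0$ is known to be a regular value of $J$ on the open cube, the regular value theorem makes $Z=J^{-1}(0)$ a smooth embedded surface. $Z$ is closed in the compact set $Q$ and disjoint from $\partial Q$, so it is compact and has no boundary. Each connected component $S$ is therefore a compact connected embedded surface in $\mathbb R^3$. By the Jordan--Brouwer separation theorem, $S$ is orientable and separates $\mathbb R^3$ into a bounded region $\Omega$ and an unbounded one, with $\partial\Omega=S$. Moreover $\Omega\subset Q$, because the exterior of $Q$ is connected, unbounded and disjoint from $S$.
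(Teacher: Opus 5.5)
Your topological paragraph is fine: the regular value theorem, compactness and Jordan--Brouwer match the paper, and the refinement $\Omega\subset Q$ is a nice touch. Your observation that $J(p+te_i)=q_it^2$ with $q_i>0$ is also correct. The gap is the lemma's central claim, $\nabla J\ne0$ on $Z$, which your proposal does not prove.

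\textbf{The rank-two case is only a plan.} You propose Jacobi's formula followed by sign tracking on planes through the kernel direction, and you leave the decisive step open. Those planes are generally not coordinate planes, so $J$ restricted to them need not be biquadratic. Nothing in the sketch identifies where a suitable negative value would come from. The perturbation fallback does not help as described, since replacing $F$ by $F+\varepsilon G$ moves $Z$ and says nothing about critical points of the original $J$.

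\textbf{The rank-$\le1$ step fails.} Write $A=\partial_x\partial_yF(p)$, $B=\partial_x\partial_zF(p)$, $C=\partial_y\partial_zF(p)$. If $U,V,W$ span at most a line, every determinant in the constant and linear coefficients on a coordinate line has two dependent columns. What remains is $J(p+te_1)=\det([U\mid A\mid B])\,t^2$, $J(p+te_2)=\det([A\mid V\mid C])\,t^2$ and $J(p+te_3)=\det([B\mid C\mid W])\,t^2$. Rank $\le1$ does not constrain these coefficients. For example, $U=V=W=e_1$, $A=e_2$, $B=e_3$, $C=e_2-e_3$ makes all three nonzero. So no coordinate line is forced to carry $J\equiv0$.

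\textbf{The missing idea} is to use boundary positivity on whole coordinate squares through $p$, not just on coordinate lines. Two facts are needed.

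First, planar slices inherit positivity from their perimeters. Suppose a fixed-$x$ square $R$ has image in a plane with unit normal $n$. Write the slice as $p_0+ay+bz+cyz$. The $yz$ term of $(a+cz)\times(b+cy)$ is $c\times c=0$. Hence on $R$ you get $J=\ell\,q$, with $\ell=(\partial_yF\times\partial_zF)\cdot n$ affine and $q=\partial_xF\cdot n$ bilinear. The factor $\ell$ has one sign on the connected perimeter, which lies in $\partial Q$; orient $n$ so that $\ell>0$ there, and then $\ell>0$ on all of $R$. Then $q=J/\ell>0$ at the corners, so $q>0$ on $R$, exactly as for a planar bilinear quad. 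This makes $J(p)=0$ impossible.

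Second, $J(p)=0$ together with $\nabla J(p)=0$ forces such a planar slice.
\begin{itemize}
\item If two of $U,V,W$ are dependent, the corresponding slice is planar at once; for $V,W$ it is $F(p)+Vs+Wt+Cst$. This also disposes of your rank-$\le1$ case.
\item Otherwise $U=\alpha V+\beta W$ with $\alpha\beta\ne0$. Set $\kappa=\det([V\mid W\mid C])$. The determinant product rule turns $\partial_yJ(p)=\partial_zJ(p)=0$ into $\det([A\mid V\mid W])=\beta\kappa$ and $\det([B\mid V\mid W])=\alpha\kappa$.
\item Substituting these into $\partial_xJ(p)$ gives $\partial_xJ(p)=-2\alpha\beta\kappa$. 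So $\partial_xJ(p)=0$ forces $\kappa=0$, and the fixed-$x$ slice is planar.
\end{itemize}

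Without these two facts, or a genuine substitute, the regularity claim is unproved and the regular value theorem cannot be invoked.
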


\noindent\emph{The proof of \autoref{lemma-1} is in \autoref{proof-1}.} The crux is to rule out a point \(p\in Z\) with \(\nabla J(p)=0\). It turns out that the algebraic conditions \(J(p)=0\) and \(\nabla J(p)=0\) are strict enough to force at least one of the three coordinate squares through \(p\) in \(Q\) to have a planar image under \(F\). We then analyze coordinate planes with planar images and show that, similar to the bilinear quad case, positivity of \(J\) on its perimeter implies positivity throughout the square. That perimeter lies on \(\partial Q\), where \(J>0\), so \(J(p)>0\), contradicting \(J(p)=0\). Thus \(\nabla J(p)\ne0\) at every \(p\in Z\).

Let \(S\) be one connected component of \(Z\), and let \(p_-\) and \(p_+\) minimize and maximize \(x\) on \(S\). These extreme points exist by compactness. In reference-coordinate space, the outward normals at \(p_-\) and \(p_+\) point in the negative and positive \(x\)-directions, respectively. Since \(J=0\) on \(S\), \(\nabla J\) is normal to \(S\), so
\begin{equation}
\begin{aligned}
\frac{\partial J}{\partial y}(p_-)&=\frac{\partial J}{\partial z}(p_-)=0,\\
\frac{\partial J}{\partial y}(p_+)&=\frac{\partial J}{\partial z}(p_+)=0.
\end{aligned}\label{extreme-partials}
\end{equation}
The continuous, nonzero normal field \(\nabla J\) must point either outward everywhere on the connected surface \(S\), or inward everywhere; it cannot switch without vanishing. Thus \(\nabla J(p_-)\) and \(\nabla J(p_+)\) point in opposite \(x\)-directions, giving
\begin{equation}
\boxed{\frac{\partial J}{\partial x}(p_-)\frac{\partial J}{\partial x}(p_+)<0.}\label{opposite-derivative-signs}
\end{equation}

So far, the argument has used the geometry of an enclosed regular zero surface. The following lemma uses properties of trilinearity to show that the \textbf{derivatives \(\frac{\partial J}{\partial x}(p_-)\) and \(\frac{\partial J}{\partial x}(p_+)\) must have the same nonzero sign}.

\begin{lemma}[{On \(S\), \(\frac{\partial J}{\partial x}\) has one nonzero sign wherever \(\frac{\partial J}{\partial y}=\frac{\partial J}{\partial z}=0\)}]\label{lemma-2}

Under the boundary-positivity assumption, let \(S\) be a connected component of \(Z\). Suppose \(p,q\in S\) satisfy

\begin{equation}
\begin{aligned}
\frac{\partial J}{\partial y}(p)&=\frac{\partial J}{\partial z}(p)=0,\\
\frac{\partial J}{\partial y}(q)&=\frac{\partial J}{\partial z}(q)=0.
\end{aligned}\label{web-equation-37}
\end{equation}

\Needspace{4\baselineskip}
Then

\begin{equation}
\boxed{\frac{\partial J}{\partial x}(p)\frac{\partial J}{\partial x}(q)>0.}\label{same-derivative-sign}
\end{equation}

\end{lemma}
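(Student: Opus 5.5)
The plan is to compute \(\nabla J\) exactly at points of \(Z\), using the rank-two structure of \(DF\) there together with the trilinear identities \(\partial_x U=0\), \(\partial_x V=\partial_y U\) and \(\partial_x W=\partial_z U\). At a point \(p\in S\) we have \(J(p)=0\). By \autoref{lemma-1}, \(\nabla J(p)\neq 0\), so \(DF(p)\) has rank exactly two; rank one would force every \(2\times2\) cofactor, and hence \(\nabla J\), to vanish. Therefore \(\operatorname{adj}(DF(p))=c\,n\,m^{T}\) with \(c\neq0\), where \(n\) spans the left null space and \(m=(m_1,m_2,m_3)\) spans the right null space. Jacobi's formula \(\partial J=\operatorname{tr}(\operatorname{adj}(DF)\,\partial DF)\) then gives, with \(a=n\cdot F_{yz}\), \(b=n\cdot F_{xz}\), \(d=n\cdot F_{xy}\),
\begin{equation*}
J_x=c\,(d\,m_2+b\,m_3),\qquad J_y=c\,(d\,m_1+a\,m_3),\qquad J_z=c\,(b\,m_1+a\,m_2).
\end{equation*}

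Imposing \(J_y=J_z=0\) gives \(d\,m_1=-a\,m_3\) and \(b\,m_1=-a\,m_2\). I would first dispose of the degenerate cases \(m_1=0\) and \(a=0\). In these cases the relations force \(n\) to be orthogonal to enough of the mixed derivatives that the coordinate square \(x=p_x\) has planar image. The planar-square argument already used in the proof of \autoref{lemma-1} (perimeter positivity implies positivity on the square) then contradicts \(J(p)=0\). In the generic case \(m_1\neq0\), substitution yields
\begin{equation*}
J_x(p)=-\frac{2\,c\,a\,m_2m_3}{m_1}.
\end{equation*}
The scale ambiguity \(n\mapsto\lambda n\), \(m\mapsto\mu m\) leaves the product \(c\,n\,m^{T}\) fixed, so \(\operatorname{sign}J_x(p)\) is an intrinsic quantity. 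The goal is to identify it with an intrinsically signed geometric quantity that does not depend on the particular point \(p\in S\).

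Separately, I would use the fact that \(J\) is quadratic in \(x\) along every line parallel to the \(x\)-axis. Such a line through \(p\) meets \(\partial Q\) at \(x=0\) and \(x=1\), where \(J>0\), and it has a simple root at \(p_x\). Hence the quadratic opens upward and has a second root in \((0,1)\). The same reasoning applies in the \(y\)- and \(z\)-directions. On the \(x\)-slice through \(p\), the function \((y,z)\mapsto J(p_x,y,z)\) is biquadratic, has a critical zero at \((p_y,p_z)\), and is positive on the perimeter of the square. I would use the \(y\)- and \(z\)-line arguments to show that this critical point is a nondegenerate local minimum of the slice. That gives \(J_{yy}>0\), \(J_{zz}>0\) and \(J_{yy}J_{zz}-J_{yz}^2>0\) at \(p\), and these Hessian entries can also be written in the \(c,n,m\) notation. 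Combining them with the formula for \(J_x\) should express \(\operatorname{sign}J_x(p)\) as the sign of an expression built only from \(F_{xyz}\) and the second derivatives, for example \(\operatorname{sign}(n\cdot F_{xyz})\) after normalizing \(m_1>0\). It would remain to show that this sign is the same at \(p\) and \(q\), using connectedness of \(S\), since \(\nabla J\) does not vanish on \(S\) and so the orientation of \((n,m)\) can be chosen continuously along \(S\).

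The main obstacle is this last transfer from the local formula to a sign that is constant across \(S\). The locus \(\{J_y=J_z=0\}\cap S\) need not be connected, so a naive continuity argument fails. My first attempt would be to write the sign of \(J_x\) at such points as the sign of a quantity that is continuous and nonvanishing on all of \(S\), not just on that locus. The natural candidate is the orientation of \(\nabla J\) relative to the null-vector frame, \(\operatorname{sign}\bigl(c\,(n\cdot F_{xyz})\bigr)\) with \(m\) normalized. If its nonvanishing on \(S\) cannot be established directly, I would fall back on the planar-image contradiction of \autoref{lemma-1} at any point where it vanishes.
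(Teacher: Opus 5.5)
\textbf{There is a genuine gap.} Your first paragraph is correct and is the paper's key identity in other coordinates. Normalizing \(m_1=1\) gives \(m=(1,-\alpha,-\beta)\) with \(U=\alpha V+\beta W\). The first row of \(\operatorname{adj}(DF)\) is \((V\times W)^{T}=c\,n^{T}\), so \(c\,a=\det([V\mid W\mid C])=\kappa\). Hence your \(-2c\,a\,m_2m_3/m_1\) is exactly \(-2\alpha\beta\kappa\) from \autoref{x-derivative-factorization}. (A minor slip: for your component formulas to follow from Jacobi's formula, the adjugate must be \(c\,m\,n^{T}\), not \(c\,n\,m^{T}\).)

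The gap is the final step, which is the real content of the lemma. You never exhibit a quantity that is continuous and nonvanishing on all of \(S\) and equals \(J_x\) on the locus \(J_y=J_z=0\). Your candidate \(\operatorname{sign}(c\,(n\cdot F_{xyz}))\) is linked to \(J_x\) by nothing in your outline. Moreover, vanishing of \(n\cdot F_{xyz}\) at a point of \(S\) does not flatten any coordinate square, so the planar-image fallback cannot exclude it. Nothing about trilinearity keeps \(F_{xyz}\) away from zero; the map in \autoref{eq:non-strict-map} has \(F_{xyz}\equiv0\). The Hessian detour also fails. The line arguments give \(J_{yy}(p)>0\) and \(J_{zz}(p)>0\) but say nothing about \(J_{yz}\), so positive definiteness of the slice Hessian is unsupported. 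Even if it held, local information at \(p\) would not carry a sign to \(q\).

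\textbf{The missing observation is that your own formula already is the required global quantity.} Each factor of \(-2c\,a\,m_2m_3/m_1\) is defined at every \(r\in S\), not only on the locus, and each is nonzero there by \autoref{lemma-1a}:
\begin{itemize}
\item If \(m_1\), \(m_2\) or \(m_3\) vanished, two columns of \(DF(r)\) would be dependent, flattening the fixed-\(x\), fixed-\(y\) or fixed-\(z\) square through \(r\), respectively.
\item If \(a=0\), then \(V\), \(W\) and \(C=F_{yz}\) would all lie in the plane \(n^{\perp}\), flattening the fixed-\(x\) square.
\item \(c\neq0\) because \(DF\) has rank two on \(S\).
\end{itemize}
Note that you excluded only \(m_1=0\) and \(a=0\), and only at \(p\); the argument needs every one of these exclusions at every point of \(S\).

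By the scale invariance you already noted, the combination is a well-defined function on \(S\). With \(m_1=1\) it equals \(-2\alpha\beta\kappa\), where \(\kappa=\det([V\mid W\mid C])\), \(\alpha=\det([U\mid W\mid C])/\kappa\) and \(\beta=\det([V\mid U\mid C])/\kappa\), so it is continuous. A continuous nonvanishing function on the connected set \(S\) has constant sign, and it coincides with \(J_x\) at \(p\) and \(q\). This is precisely the paper's proof.
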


\noindent\emph{The proof of \autoref{lemma-2} is in \autoref{proof-2}.} The strategy is to express \(\frac{\partial J}{\partial x}\), at points where \(\frac{\partial J}{\partial y}=\frac{\partial J}{\partial z}=0\), as a product of continuous scalar functions defined throughout \(S\). We obtain this factorization by expanding the derivatives of the determinant and using the column dependence implied by \(J=0\). We then show that none of the factors can vanish: a vanishing factor would force a coordinate square through that point to have a planar image under \(F\), contradicting \(J=0\) by the same perimeter-positivity argument used in \autoref{lemma-1}. Since \(S\) is connected, each continuous, nonzero factor has a fixed sign. Their product therefore has the same nonzero sign at every point satisfying \autoref{web-equation-37}.

\autoref{extreme-partials} supplies precisely the conditions of \autoref{lemma-2} for \(p_-\) and \(p_+\). \autoref{same-derivative-sign} in \autoref{lemma-2} therefore contradicts \autoref{opposite-derivative-signs}.
Consequently, \(Z\) is empty. The continuity argument at the beginning now gives
\begin{equation}
\boxed{J>0\quad\text{throughout }Q.}\label{web-equation-49}
\end{equation}
This proves \autoref{thm:knupp-conjecture}.

\subsection{Boundary attainment of the minimum Jacobian}\label{sec-minimum-title}

While \autoref{sec:knupp-proof} establishes that hexahedron validity can be verified purely by knowing minimal boundary jacdet, it does not guarantee that the minimal boundary jacdet is the minimal jacdet of the entire hexahedron. Nonetheless, \cite{marschner2020} found in experiments that the minimal jacdet was always achieved on the boundary. We know from \autoref{sec:non-strict-counterexample} that this cannot generally extend to all hexahedra. However we prove in this subsection that, conditional on hexahedron validity, its minimal jacdet is also attained on its boundary.

\begin{theorem}[Boundary minimum]\label{thm:boundary-minimum}
If \(J>0\) on \(\partial Q\), then
\begin{equation}
\min_{r\in Q}J(r)=\min_{r\in\partial Q}J(r)>0.
\label{eq:boundary-minimum}
\end{equation}
\end{theorem}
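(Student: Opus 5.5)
Our plan is to reduce \autoref{thm:boundary-minimum} to \autoref{thm:knupp-conjecture} by a shifting trick. The Jacobian of a trilinear map changes in a structured way when we add a linear map to \(F\). A constant shift of \(J\), however, is not directly achievable that way, so we instead work with a one-parameter family of maps. Let \(m=\min_{\partial Q}J>0\) and \(m^*=\min_Q J\). By \autoref{thm:knupp-conjecture}, \(m^*>0\); clearly \(m^*\le m\). Suppose, for contradiction, that \(m^*<m\), attained at an interior point \(p^*\). We will build a trilinear map whose Jacobian is strictly positive on \(\partial Q\) but vanishes (or is negative) somewhere inside, contradicting \autoref{thm:knupp-conjecture}.

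The natural deformation is to compose \(F\) with a family of linear maps, or to add a multiple of a fixed affine map. Composition \(A\circ F\) only multiplies \(J\) by \(\det A\), which preserves signs and is useless. Instead we consider \(F_t=F-tG\) with \(G(x,y,z)=(x,y,z)\) mapped through some constant frame. More precisely, we use \(F_t=F-t\,L\) where \(L\) is linear with \(DL=M\) constant. Then \(\det(DF-tM)=\det(M)\det(M^{-1}DF-tI)\) is a characteristic polynomial in \(t\), and it remains trilinear-derived since \(F_t\) is trilinear for every \(t\).

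\textbf{Step 1.} Choose \(M=DF(p^*)\). Then \(J_t(p^*)=\det(DF(p^*))(1-t)^3\) vanishes at \(t=1\).

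\textbf{Step 2.} We need \(J_t>0\) on \(\partial Q\) for all \(t<1\), or at least on an interval \([0,t_0)\) along which the interior first hits zero before the boundary does. Let \(t_0\) be the first \(t\in(0,1]\) at which \(\min_Q J_t=0\); this exists by continuity and Step 1. By \autoref{thm:knupp-conjecture} applied to \(F_t\) for \(t<t_0\), together with a limiting argument using the strict version, the first zero must occur on \(\partial Q\). The contradiction therefore has to come from showing that the boundary stays positive longer than the interior. This requires \(J_t(q)\) to behave like \(\det(DF(q)-tM)\) with eigenvalues of \(M^{-1}DF(q)\) all exceeding those at \(p^*\), which does not follow directly.

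\textbf{Main obstacle.} We expect the main difficulty to be controlling \(\det(DF(q)-tDF(p^*))\) on the boundary, since the eigenvalues of \(DF(p^*)^{-1}DF(q)\) may be complex or small even when \(J(q)>J(p^*)\). If this fails, the fallback is a direct critical-point argument. At an interior minimum \(p^*\), \(\nabla J(p^*)=0\), and \(J(p^*)=m^*\). Replacing \(J\) by \(J-m^*\) is not the Jacobian of a trilinear map, but we would re-run the machinery of \autoref{lemma-1} and \autoref{lemma-2} on the level set \(\{J=c\}\) for \(c\in(m^*,m)\). Its components are compact, interior, and enclose the minimizer, so the extreme-point argument gives \autoref{opposite-derivative-signs} for that level set. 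The task is then to re-establish \autoref{same-derivative-sign} when \(J=c\neq0\). That requires adapting the factorization in \autoref{proof-2}, which used column dependence from \(J=0\). We would try to replace column dependence by the adjugate identity \(\partial_x J=\operatorname{tr}(\operatorname{adj}(DF)\,\partial_x DF)\). We would then show that the factors stay nonvanishing using positivity of \(J\) on planar-image coordinate squares, as in \autoref{lemma-1}.
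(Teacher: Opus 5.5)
Neither of your two routes is carried through, and the first cannot work as set up. Along $F_t=F-tL$, \autoref{thm:knupp-conjecture} only says that at the first $t_0$ with $\min_Q J_{t_0}=0$, the function $J_{t_0}$ already vanishes somewhere on $\partial Q$. That is consistent with the theorem, not a contradiction. A contradiction would need $\det(DF(q)-t\,DF(p^*))>0$ for all $q\in\partial Q$ and all $t\le1$. But this quantity vanishes whenever $t\in(0,1)$ is a real eigenvalue of $DF(p^*)^{-1}DF(q)$, and $J(q)>J(p^*)$ only says the product of those eigenvalues exceeds $1$; eigenvalues $\tfrac12,\tfrac12,5$ already defeat it, as you note yourself.

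The fallback stops exactly at the decisive step. The identity $\frac{\partial J}{\partial x}=-2\alpha\beta\kappa$ behind \autoref{lemma-2} exists only because $J=0$ forces $U=\alpha V+\beta W$. On $\{J=c\}$ with $c\neq0$ the columns are independent. Jacobi's formula $\frac{\partial J}{\partial x}=\operatorname{tr}\bigl(\operatorname{adj}(DF)\,\frac{\partial DF}{\partial x}\bigr)$ holds at every point and by itself gives no factorization into continuous nonvanishing factors. Even the regularity step does not transfer. \autoref{lemma-1b} uses $J(p)=0$, and the minimizer you want to exclude is itself a critical point with $J\neq0$, so you would at least have to choose $c$ to be a regular value. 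As it stands, the proposal names the obstacles rather than overcoming them.

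The missing idea is that no deformation or level-set topology is needed. Work directly at an interior local minimizer $p$, where \autoref{thm:knupp-conjecture} supplies $J(p)>0$, and use second-order information along the three coordinate lines. Let $U,V,W$ be the first partials of $F$ at $p$, and $A,B,C$ the mixed partials $\frac{\partial^2F}{\partial x\,\partial y},\frac{\partial^2F}{\partial x\,\partial z},\frac{\partial^2F}{\partial y\,\partial z}$ at $p$. Trilinearity makes $J(p+te_i)=J(p)+t\,\partial_iJ(p)+t^2q_i$ exact, with $q_1=\det([U\mid A\mid B])$, $q_2=\det([A\mid V\mid C])$, $q_3=\det([B\mid C\mid W])$. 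At the minimum all $\partial_iJ(p)=0$ and all $q_i\ge0$. The common-column identity
\begin{align*}
&\det([u\mid v\mid w])\det([u\mid a\mid b])\\
&\quad=\det([u\mid a\mid w])\det([u\mid v\mid b])\\
&\qquad-\det([u\mid v\mid a])\det([u\mid b\mid w]),
\end{align*}
together with $\nabla J(p)=0$, yields
\begin{align*}
J(p)\,q_1&=-\det([U\mid A\mid W])^2-s_1s_2,\\
J(p)\,q_2&=-\det([A\mid V\mid W])^2-s_1s_3,\\
J(p)\,q_3&=-\det([B\mid V\mid W])^2-s_2s_3,
\end{align*}
where $s_1=\det([U\mid V\mid A])$, $s_2=\det([U\mid B\mid W])$, $s_3=\det([C\mid V\mid W])$. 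Because $(s_1s_2)(s_1s_3)(s_2s_3)=(s_1s_2s_3)^2\ge0$, some pairwise product is nonnegative, so $J(p)q_i\le0$ for that $i$. With $J(p)>0$ and $q_i\ge0$ this forces $q_i=0$. Then $J$ is constant on the coordinate segment through $p$, whose endpoints lie on $\partial Q$, so the value at an interior global minimizer is also attained on the boundary.
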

\noindent\emph{The proof of \autoref{thm:boundary-minimum} is in \autoref{proof:boundary-minimum}; here we give an outline.}
To establish the minimum claim, we must also rule out an interior minimum whose value is smaller than every boundary value. To do that, we suppose \(p\in(0,1)^3\) is a strictly interior local minimizer with \(J(p)>0\) and show that \(J\) is constant along at least one coordinate-line segment through \(p\) extending to \(\partial Q\).

Along each coordinate line, \(J\) is a polynomial of degree at most two. At an interior local minimum, all three linear coefficients vanish and all three quadratic coefficients \(q_i\) are nonnegative. The key algebraic step uses determinant identities to show that \(J(p)q_i\le0\) for at least one coordinate direction. Because the hexahedron is valid by \autoref{thm:knupp-conjecture}, \(J(p)>0\), forcing that quadratic coefficient to vanish. On that coordinate line, \(J\) is therefore constant, so its boundary endpoints have the same value as \(p\). Applying this to an interior global minimizer gives
\begin{equation}
\boxed{\min_{p\in Q}J(p)=\min_{p\in\partial Q}J(p)>0.}\label{web-equation-67}
\end{equation}

\subsection{Face minimization via quartic root finding}\label{sec:face-minimum}

We have established in \autoref{thm:boundary-minimum} that the minimum Jacobian determinant of a valid hexahedron is attained on its boundary. We therefore turn to determining this boundary minimum. The face-minimization method below applies to both valid and invalid hexahedra, although only for valid hexahedra is the boundary minimum guaranteed to equal the minimum over the entire cube.

We will construct a finite set of candidate points $\mathcal C$ guaranteed to include a global minimizer of the jacdet on a face. We first determine the points to check if a global minimizer lies on the face perimeter (edges of the cube). We then show that any smaller minimum in the face interior can be found by solving a quartic equation, which admits closed form solution. Combining the candidates guarantees that at least one attains the global face minimum. Taking the minimum over all six faces gives the minimum jacdet on the boundary of the hexahedron.

We first establish notation for the face calculation. On a face \(x=x_0\), with \(x_0\in\{0,1\}\), define
\begin{equation}
K=[0,1]^2,\qquad f(s,t)=J(x_0,s,t).\label{face-face-definition}
\end{equation}
The same construction applies with \(y\) or \(z\) fixed.
Grouping by powers of \(s\) separates the quadratic dependence on \(s\) from the coefficient functions of \(t\):
\begin{equation}
\begin{aligned}
f(s,t)&=a(t)s^2+b(t)s+c(t),\\
a(t)&=a_0+a_1t,\\
b(t)&=b_0+b_1t+b_2t^2,\\
c(t)&=c_0+c_1t+c_2t^2.
\end{aligned}\label{face-face-form}
\end{equation}
The eight real coefficients are determined by the hexahedron and the chosen face. Note the \(s^2t^2\) coefficient is 0. This form is easily verified using symbolic manipulation tools. 

\subsubsection{Find the minimum on the four edges first}\label{sec:face-edge-minimum}

Parameterize each edge by \(r\in[0,1]\). The four restricted polynomials are

\begin{equation}
\begin{array}{c|c}
\text{point on the edge}&\text{value of }f\\\hline
(0,r)&c(r)\\
(1,r)&a(r)+b(r)+c(r)\\
(r,0)&a(0)r^2+b(0)r+c(0)\\
(r,1)&a(1)r^2+b(1)r+c(1).
\end{array}\label{face-edge-restrictions}
\end{equation}

Each restriction is a quadratic \(q(r)=d_2r^2+d_1r+d_0\), whose minimum on \([0,1]\) occurs at an endpoint or at the vertex of an upward-opening parabola. The perimeter candidates in \(\mathcal C\) are therefore the four corners of \(K\) and the point on each edge with \(d_2>0\) corresponding to
\begin{equation}
r_*=-\frac{d_1}{2d_2},\qquad 0<r_*<1.\label{face-edge-vertex}
\end{equation}
At least one of these at most eight perimeter candidates attains the face perimeter minimum.

\subsubsection{Reduce the interior calculation to a quartic}

The edge candidates are sufficient unless the face has a smaller value in its interior. At an interior minimizer, both partial derivatives of \(f\) vanish. We will solve the equation in \(s\) explicitly, then use the equation in \(t\) to locate the remaining candidates.

\begin{equation}
0=\frac{\partial f}{\partial s}(s,t)=2a(t)s+b(t).\label{face-stationary-s}
\end{equation}
\begin{equation}
0=\frac{\partial f}{\partial t}(s,t)=a'(t)s^2+b'(t)s+c'(t).\label{face-stationary-t}
\end{equation}

Before dividing by \(a(t)\), however, we need to account for the possibility that this coefficient is zero.
At a strictly interior local minimizer \((s_0,t_0)\), the second-derivative condition gives \(2a(t_0)=\frac{\partial^2 f}{\partial s^2}(s_0,t_0)\ge0\). If \(a(t_0)=0\), \autoref{face-stationary-s} gives \(b(t_0)=0\), so
\begin{equation}
f(s,t_0)=\underbrace{a(t_0)}_{0}s^2+\underbrace{b(t_0)}_{0}s+c(t_0)=c(t_0).\label{face-flat-slice}
\end{equation}
The same value is therefore attained at the face-perimeter points \((0,t_0)\) and \((1,t_0)\), so an interior minimum smaller than the face perimeter minimum must have \(a(t)>0\). At such a point, \autoref{face-stationary-s} gives
\begin{equation}
s=s_*(t):=-\frac{b(t)}{2a(t)}.\label{face-recovered-coordinate}
\end{equation}
Substitute this value of \(s\) into \autoref{face-stationary-t} and multiply by \(4a(t)^2\) to remove the denominators:
\begin{equation}
\begin{gathered}
\frac{\partial f}{\partial t}(s_*(t),t)=0=a'(t)\left(-\frac{b(t)}{2a(t)}\right)^2+b'(t)\left(-\frac{b(t)}{2a(t)}\right)+c'(t),\\[4pt]
0=4a(t)^2c'(t)-2a(t)b(t)b'(t)+a'(t)b(t)^2=:P(t).
\end{gathered}\label{face-quartic-elimination}
\end{equation}
The detailed expansion in \autoref{app:quartic-expansion} shows that \(P\) has degree at most four.

If \(P\) is not the zero polynomial, compute its roots using the classical formulas for polynomials of degree at most four \cite[\S1.11(iii)]{dlmf}, and keep the distinct real roots in \((0,1)\).
For each root, retain the point \((s_*(t),t)\) only when \(a(t)>0\) and \(0<s_*(t)<1\). There are at most four retained interior points. These are the interior candidates in \(\mathcal C\).
Some retained stationary points can be saddles. We can simply keep them: every retained point is a feasible point of the square, so including extra points cannot produce a value below the true global minimum.

\subsubsection{If the quartic vanishes identically, the edges suffice}

A root finder needs a nonzero polynomial. If every coefficient of \(P\) is zero, then every \(t\) solves \(P(t)=0\); enumerating roots is impossible. We need a separate conclusion for this case.

\Needspace{6\baselineskip}
\begin{lemma}[{\(P\equiv0\) implies a global face minimizer on the face perimeter}]\label{face-lemma-zero}

The algorithm can return the smallest edge-candidate value without adding any interior candidates.

\end{lemma}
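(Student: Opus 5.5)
The plan is to interpret \(P\) as the numerator of the derivative of the partial minimum of \(f\) over \(s\), so that \(P\equiv0\) makes that partial minimum locally constant. Then I would follow the constant level set out to the face perimeter. Define, wherever \(a(t)>0\),
\[
g(t):=\min_{s\in\mathbb R}f(s,t)=f(s_*(t),t)=c(t)-\frac{b(t)^2}{4a(t)}.
\]
A direct quotient-rule computation gives
\[
g'(t)=c'(t)-\frac{2a(t)b(t)b'(t)-a'(t)b(t)^2}{4a(t)^2}=\frac{P(t)}{4a(t)^2}.
\]
Hence \(P\equiv0\) implies that \(g\) is constant on every interval where \(a>0\). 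This identity is the main algebraic step. It only needs checking against \autoref{face-quartic-elimination}.

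Let \(m=\min_K f\). Suppose the minimum is attained only at interior points, and take an interior minimizer \((s_0,t_0)\); I aim for a contradiction. By the argument around \autoref{face-flat-slice}, \(a(t_0)=0\) already yields perimeter points \((0,t_0)\) and \((1,t_0)\) with value \(m\). So assume \(a(t_0)>0\). Because \(a\) is affine, the set \(\{t\in[0,1]:a(t)>0\}\) containing \(t_0\) is an interval \(I\). Its endpoints \(t_e\) are either in \(\{0,1\}\) or zeros of \(a\). Since \((s_0,t_0)\) minimizes \(f\) along its \(s\)-slice with \(a(t_0)>0\), we have \(s_0=s_*(t_0)\) and \(g(t_0)=m\). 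Consequently \(g\equiv m\) on \(I\). For every \(t\in I\), the point \((s_*(t),t)\) attains \(m\) whenever it lies in \(K\).

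Next I would move \(t\) from \(t_0\) toward an endpoint \(t_e\) of \(I\), with \(s_*\) continuous on \(I\). There are two cases.
\begin{enumerate}
\item If \(s_*(t)\) leaves \((0,1)\) at some \(t_1\in I\) strictly before \(t_e\), then by continuity \(s_*(t_1)\in\{0,1\}\). The point \((s_*(t_1),t_1)\) lies on the perimeter and has value \(g(t_1)=m\).
\item Otherwise \(s_*(t)\in[0,1]\) for all \(t\) between \(t_0\) and \(t_e\). By compactness, choose \(t_k\to t_e\) with \(s_*(t_k)\to\bar s\in[0,1]\). Continuity of \(f\) gives \(f(\bar s,t_e)=m\).
\begin{itemize}
\item If \(t_e\in\{0,1\}\), then \((\bar s,t_e)\) is a perimeter point with value \(m\).
\item If \(t_e\) is interior with \(a(t_e)=0\), then \(s\mapsto f(s,t_e)\) is affine on \([0,1]\), bounded below by \(m\), and attains \(m\) at \(\bar s\). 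An affine function minimized at a point of \([0,1]\) attains that minimum at an endpoint, so \((0,t_e)\) or \((1,t_e)\) attains \(m\).
\end{itemize}
\end{enumerate}

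In every case a perimeter point attains \(m\), contradicting the assumption that the minimum is attained only in the interior. By \autoref{sec:face-edge-minimum}, the edge candidates contain a perimeter minimizer, so the smallest edge-candidate value equals \(m\).

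I expect the delicate step to be the degenerate endpoint where \(a\to0\). There \(s_*\) may blow up and the formula for \(g\) breaks down. This is why the argument tracks \(s_*\) only while it stays in \([0,1]\), and relies on compactness together with the affine-in-\(s\) structure at \(t_e\) rather than on the formula for \(g\).
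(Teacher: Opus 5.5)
Your proposal is correct and follows essentially the same route as the paper: the vertex value $g(t)=f(s_*(t),t)$ has derivative $P(t)/(4a(t)^2)$, so it is constant when $P\equiv0$, and you follow the vertex from $t_0$ until it reaches the face perimeter or $a$ vanishes. The differences are minor. You verify $g'$ with the quotient rule on $c-b^2/(4a)$ rather than the paper's chain-rule argument. At a zero $t_e$ of $a$, you use compactness together with the affine-in-$s$ slice, whereas the paper shows by limits that $b(t_e)=0$ and $c(t_e)=m$, so that whole slice is flat.
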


\begin{proof}[Proof of \autoref{face-lemma-zero}]

Choose a global minimizer \((s_0,t_0)\). If this point lies on the face perimeter, the claim holds. For a strictly interior minimizer, the second-derivative condition gives \(a(t_0)\ge0\); if \(a(t_0)=0\), \autoref{face-flat-slice} shows that the same value is attained on the face perimeter.

Now consider the case where \((s_0,t_0)\) is strictly interior and \(a(t_0)>0\). \autoref{face-recovered-coordinate} gives \(s_0=s_*(t_0)\in(0,1)\). We will show that the value at this minimizer is also attained on the face perimeter.

We will follow the parabola's vertex as \(t\) changes, show that its value stays constant, and find a face-perimeter point with that value. Where \(a(t)>0\), the value at the vertex is
\begin{equation}
h(t):=f(s_*(t),t).\label{face-vertex-value}
\end{equation}
The chain rule gives
\begin{equation}
h'(t)=\underbrace{\frac{\partial f}{\partial s}(s_*(t),t)}_{=0}\,s_*'(t)
+\underbrace{\frac{\partial f}{\partial t}(s_*(t),t)}_{=0}=0.\label{face-vertex-value-derivative}
\end{equation}
Stationarity in \(s\) makes the first term zero. By \autoref{face-quartic-elimination}, the second term is \(P(t)/(4a(t)^2)=0\) because \(P\equiv0\).
Thus, as we follow the vertex from \(t_0\) while \(a(t)>0\), its value stays equal to the global minimum \(h(t_0)\).

To find a face-perimeter point with that value, decrease \(t\) toward \(t=0\), with dependent \(s=s_*(t)\) staying at the parabola's vertex. While \(a(t)>0\) and the vertex remains strictly inside the face, \(s_*(t)\) is continuous, so we can continue decreasing \(t\). If we reach \(t=0\), \(s=0\), or \(s=1\) while \(a(t)>0\), continuity gives the face-perimeter point the same minimum value.

The remaining case is that \(a(t)\) reaches zero before or at the same time as we reach the face perimeter. Let \(t_1\) be that value of \(t\), so \(a(t_1)=0\). As \(t\) approaches \(t_1\), the bound \(0<s_*(t)<1\) gives
\begin{equation}
\begin{aligned}
b(t)&=-2a(t)s_*(t)\longrightarrow0,\\
c(t)&=h(t_0)+a(t)s_*(t)^2\longrightarrow h(t_0).
\end{aligned}\label{face-zero-endpoint-limit}
\end{equation}
By continuity, \(b(t_1)=0\) and \(c(t_1)=h(t_0)\). Therefore
\begin{equation}
\begin{aligned}
f(s,t_1)&=\underbrace{a(t_1)}_{0}s^2+\underbrace{b(t_1)}_{0}s+c(t_1)\\
&=h(t_0)\qquad\forall s\in[0,1].
\end{aligned}\label{face-flat-endpoint-value}
\end{equation}
In particular, the face-perimeter points \((0,t_1)\) and \((1,t_1)\) attain the same minimum. Thus, in all cases, when \(P\equiv0\), the global minimum is attained on the face perimeter.

\end{proof}

For example, \(f(s,t)=(s-t)^2\) has a whole line of minimizers. Its polynomial \(P\) is identically zero, and the same minimum is attained at the corners \((0,0)\) and \((1,1)\). \autoref{face-lemma-zero} handles such cases without trying to enumerate infinitely many stationary points.

\subsubsection{Finite candidate set}

The edge candidates and the retained points recovered from the roots of \(P\) contain a global face minimizer; when \(P\equiv0\), the edge candidates alone suffice. Finding the minimum jacdet on the boundary of a trilinear hexahedron therefore reduces to quadratic edge checks and at most one quartic equation per face. The remaining polynomial equations have degree at most four and therefore admit closed-form solutions \cite[\S1.11(iii)]{dlmf}.

\section{Algorithm}\label{sec:algorithm}

We collect the candidate locations from \autoref{sec:face-minimum} into \autoref{alg:face-minimum} to find the minimum Jacobian determinant on the cube boundary. Shared corners and edges are processed once, giving eight vertex checks and twelve quadratic edge checks. To avoid unnecessary face quartic solves, we first test a Bernstein lower bound on each face.

\subsection{Bernstein face bounds}\label{sec:bernstein-face-bounds}
Inspired by the Bernstein coefficient bounds used in~\cite[\S3.1]{johnen2017}, we express the face polynomial in the quadratic Bernstein basis:
\begin{equation}
B_0(r)=(1-r)^2,\quad B_1(r)=2r(1-r),\quad B_2(r)=r^2.
\label{alg-bernstein-functions}
\end{equation}
These functions span all quadratics since \(1=B_0+B_1+B_2\), \(r=B_1/2+B_2\), and \(r^2=B_2\). Thus every biquadratic face polynomial has the representation
\begin{equation}
f(s,t)=\sum_{i=0}^2\sum_{j=0}^2\beta_{ij}B_i(s)B_j(t).
\label{alg-face-bernstein}
\end{equation}
On \([0,1]^2\), the weights \(B_i(s)B_j(t)\) are nonnegative and sum to one, so every value of \(f\) is a weighted average of the nine coefficients. Therefore, for a face \(R\),
\begin{equation}
L_R:=\min_{0\le i,j\le2}\beta_{ij}
\ \le\ \min_{(s,t)\in[0,1]^2}f(s,t).
\label{alg-face-lower-bound}
\end{equation}
The coefficients follow from a fixed change of basis: if \(f(s,t)=\sum_{i,j=0}^2D_{ij}s^it^j\), then
\begin{equation}
\beta=TDT^{\mathsf T},\qquad
T=\begin{pmatrix}1&0&0\\1&1/2&0\\1&1&1\end{pmatrix}.
\label{alg-bernstein-conversion}
\end{equation}
Let \(M\) be the smallest determinant value already found at an evaluated boundary point. The bound gives two pruning rules depending on if we want to find the minimum boundary Jacobian determinant or if we just want to determine validity of the hexahedron:
\begin{equation}
\begin{array}{ll}
\text{minimum:} & L_R\ge M\ \Longrightarrow\ \text{skip face }R,\\[2pt]
\text{validity:} & L_R>0\ \Longrightarrow\ \text{skip face }R.
\end{array}\label{alg-face-skip-rules}
\end{equation}
A skipped face cannot improve \(M\) in a minimum query, or is already positive in a validity query. Conversely, \(L_R\le0\) does not prove invalidity: an inconclusive bound requires the quartic calculation. We use one bound per face, without subdivision.

Our implementation usees twenty determinant samples at the eight cube corners and twelve edge midpoints to obtain the cube Bernstein coefficients~\cite[\S4]{johnen2017}; each face selects nine coefficients. Because the \(s^2t^2\) term vanishes in \autoref{face-face-form}, eight perimeter samples determine each face polynomial, with no face-center evaluation.

\subsection{Pseudocode}\label{sec:boundary-algorithm}
\autoref{alg:face-minimum} combines the edge and face candidates with the minimum-pruning rule in \autoref{alg-face-skip-rules}.

\noindent\begin{minipage}{\linewidth}
\begingroup
\renewcommand{\lstlistingname}{Algorithm}
\begin{lstlisting}[language=Python,escapeinside={(*@}{@*)},caption={Boundary minimum with Bernstein face pruning.},label={alg:face-minimum}]
def boundary_minimum(F):
    J = det(DF)
    # Evaluate J at all eight cube corners.
    p = argmin(J(v) for v in cube_vertices)
    M = J(p)  # current min jacdet
    
    # Minimize J per edge
    for (v0, v1) in cube_edge_endpoints: 
        E(r) = (1-r)*v0 + r*v1  # r in [0,1]
        # J(E(r)) = d2*r^2 + d1*r + d0
        # (*@\autoref{face-edge-restrictions}@*)
        d2, d1, d0 = coefficients_in_r(J(E(r)))
        if d2 > 0:
            r_star = -d1 / (2*d2)  # (*@\autoref{face-edge-vertex}@*)
            if 0 < r_star < 1:
                u = E(r_star)
                if J(u) < M:
                    p, M = u, J(u)
    
    # Minimize J per face
    for R in the six reference-cube faces:
        f = restrict_to_face(J, R)
        # Bernstein bound from J coefficients.
        D = power_coefficient_matrix(f)
        beta = T @ D @ T.T  # (*@\autoref{alg-bernstein-conversion}@*)
        L_R = min(beta)  # (*@\autoref{alg-face-lower-bound}@*)
        if L_R >= M: # this face does not have the min
            continue  
        # (*@\autoref{face-face-form}@*)
        a, b, c = coefficients_in_s(f)
        construct P from (*@\autoref{face-quartic-elimination}@*)
        if P is identically zero:
            continue  # (*@\autoref{face-lemma-zero}@*)
        for t0 in compute_roots(P):
            if 0 < t0 < 1 and a(t0) > 0:
                # (*@\autoref{face-recovered-coordinate}@*)
                s0 = -b(t0)/(2*a(t0))
                if 0 < s0 < 1 and f(s0,t0) < M:
                    p = lift_to_cube((s0,t0), R)
                    M = f(s0,t0)
    return p, M
\end{lstlisting}
\endgroup
\end{minipage}

The routine \texttt{compute\_roots(P)} returns all distinct real roots of a nonzero polynomial of degree at most four, using an existing polynomial solver~\cite[\S1.11(iii)]{dlmf}. The face calculation retains at most four interior candidates; if \(P\equiv0\), the edge candidates already suffice by \autoref{face-lemma-zero}. The returned value is the boundary minimum and, when positive, also the volume minimum by \autoref{thm:boundary-minimum}.

For a strict-validity query, \autoref{thm:knupp-conjecture} reduces the test to positivity on the boundary. Return false as soon as an evaluated boundary value is nonpositive, and skip a face whenever \(L_R>0\) by \autoref{alg-face-skip-rules}. Unlike the minimum query, the validity test stops at the first witness of invalidity and needs no further minimization.

\section{Results}\label{sec:results}
\newcommand{\SweepHexes}{2,100,000}
\newcommand{\SweepCutoffs}{127}
\newcommand{\SweepCutoffInvalid}{63}
\newcommand{\SweepFalseNegatives}{64}

\begin{table*}[t]
\centering
\caption{Strict-validity runtimes in nanoseconds per hex (lower is faster),
with 100,000 hexes per $\sigma$. HXT~\cite{hxtsource} uses its released, unchanged default
recursion guard \texttt{if (depth > 4) return false;}. Every depth cutoff
therefore returns invalid, even without a nonpositive witness.
False negatives are valid hexes rejected by HXT, independently confirmed
by further Bernstein subdivision. \textbf{Roots+B} adds only one Bernstein face-bound
pass to \textbf{Roots}. All cutoff cases remain in the timed populations.}
\label{tab:validity-throughput}
\begingroup\small
\renewcommand{\arraystretch}{1.10}
\begin{tabular*}{\textwidth}{@{\extracolsep{\fill}}rrrrrrr@{}}
\toprule
$\sigma$ & Invalid (\%) & \shortstack{HXT\\(ns/hex)} &
\shortstack{\textbf{Roots}\\(ns/hex)} & \shortstack{\textbf{Roots+B}\\(ns/hex)} &
\shortstack{HXT depth\\cutoffs} & \shortstack{HXT false\\negatives} \\
\midrule
0.0 & 0.000 & 138.7 & 214.4 & 155.4 & 0 & 0 \\
0.1 & 0.001 & 141.2 & 1321.7 & 210.5 & 0 & 0 \\
0.2 & 6.675 & 138.0 & 1259.3 & 220.3 & 0 & 0 \\
0.3 & 44.225 & 123.0 & 792.9 & 175.1 & 7 & 5 \\
0.4 & 75.045 & 116.6 & 392.8 & 123.8 & 15 & 9 \\
0.5 & 88.707 & 104.0 & 209.3 & 92.1 & 15 & 7 \\
0.6 & 94.546 & 91.1 & 125.7 & 72.1 & 18 & 10 \\
0.7 & 97.116 & 80.9 & 87.1 & 60.2 & 14 & 9 \\
0.8 & 98.249 & 71.9 & 69.3 & 53.8 & 7 & 3 \\
0.9 & 98.860 & 66.9 & 58.7 & 49.7 & 6 & 5 \\
1.0 & 99.208 & 64.2 & 52.5 & 46.4 & 5 & 1 \\
1.1 & 99.419 & 62.7 & 48.8 & 44.7 & 4 & 2 \\
1.2 & 99.549 & 61.2 & 45.5 & 43.4 & 6 & 2 \\
1.3 & 99.637 & 58.2 & 44.2 & 42.6 & 5 & 3 \\
1.4 & 99.693 & 58.2 & 42.7 & 41.6 & 3 & 0 \\
1.5 & 99.744 & 56.3 & 41.6 & 40.4 & 3 & 1 \\
1.6 & 99.788 & 55.8 & 40.6 & 40.1 & 2 & 1 \\
1.7 & 99.811 & 56.1 & 39.7 & 40.0 & 4 & 2 \\
1.8 & 99.829 & 55.8 & 39.7 & 39.7 & 4 & 1 \\
1.9 & 99.854 & 54.9 & 39.6 & 39.4 & 4 & 1 \\
2.0 & 99.865 & 54.6 & 39.0 & 39.1 & 5 & 2 \\
\bottomrule
\end{tabular*}
\endgroup
\end{table*}

\begin{table*}[t]
\centering
\caption{Minimum-search runtimes in microseconds per hex (lower is faster),
on the same unfiltered populations of 100,000 hexes per row.
These are \emph{different output problems}: Gmsh~4.15.2~\cite{gmshminsource} \texttt{minDetJac}
computes a lower bound on the \emph{whole-volume} minimum, while \textbf{Roots} and
\textbf{Roots+B} compute the minimum restricted to the \emph{boundary}.
The mathematical minima agree for strictly valid hexes but may differ
for invalid ones. Gmsh also refines maximum bounds and uses different
stopping rules. This comparator is Gmsh's minimum routine, not HXT's
boolean validity routine; HXT's depth-cutoff counts do not apply here.}
\label{tab:minimum-runtime}
\begingroup\small
\renewcommand{\arraystretch}{1.10}
\begin{tabular*}{\textwidth}{@{\extracolsep{\fill}}rrrr@{}}
\toprule
$\sigma$ & \shortstack{Gmsh volume lower bound\\($\mu$s/hex)} &
\shortstack{\textbf{Roots}: boundary minimum\\($\mu$s/hex)} &
\shortstack{\textbf{Roots+B}: boundary minimum\\($\mu$s/hex)} \\
\midrule
0.0 & 3.278 & 0.215 & 0.155 \\
0.1 & 3.758 & 1.321 & 0.224 \\
0.2 & 4.575 & 1.340 & 0.264 \\
0.3 & 5.500 & 1.362 & 0.313 \\
0.4 & 6.471 & 1.411 & 0.381 \\
0.5 & 7.277 & 1.425 & 0.440 \\
0.6 & 7.623 & 1.452 & 0.473 \\
0.7 & 7.905 & 1.471 & 0.493 \\
0.8 & 8.000 & 1.476 & 0.503 \\
0.9 & 8.086 & 1.478 & 0.505 \\
1.0 & 8.192 & 1.486 & 0.512 \\
1.1 & 8.162 & 1.492 & 0.512 \\
1.2 & 8.021 & 1.473 & 0.510 \\
1.3 & 8.107 & 1.470 & 0.507 \\
1.4 & 8.029 & 1.465 & 0.506 \\
1.5 & 8.012 & 1.461 & 0.502 \\
1.6 & 7.989 & 1.461 & 0.503 \\
1.7 & 7.942 & 1.463 & 0.500 \\
1.8 & 7.960 & 1.464 & 0.500 \\
1.9 & 8.037 & 1.468 & 0.502 \\
2.0 & 7.922 & 1.468 & 0.499 \\
\bottomrule
\end{tabular*}
\endgroup
\end{table*}

\subsection{Implementation and experimental setup}
We compare unchanged native HXT validity code with two variants of our
C boundary method. HXT is the released Gmsh C++ implementation at revision
\texttt{91b4154a}~\cite{hxtsource}, using the method
in~\cite{johnen2017}. Both of our variants use the
general-purpose C implementation of Algorithm~1010~\cite{orellana2020,quarticcode}
for quartic roots. Our first variant, \textbf{Roots}, runs without the Bernstein
face-bound pass in \autoref{sec:bernstein-face-bounds}. Our second variant, \textbf{Roots+B},
adds that optimization for speed.
Neither variant uses concavity screening. For this ablation, feasible stationary points of
negative curvature are retained too; the extra candidates cannot change
the minimum. The validity runs permit early exits after a negative
boundary witness. The minimum runs continue after negative witnesses.

For each $\sigma\in\{0,0.1,\ldots,2.0\}$, we generate 100,000 hexes
by adding independent Gaussian displacements of standard deviation
$\sigma$ to all twenty-four coordinates of the unit-cube vertices.
The same randomly drawn hexahedra are used across rows.
At $\sigma=0$, every element is the identity cube.
We use one thread on an Apple M4 Max, Apple Clang~16, C11/C++17,
\texttt{-O3}, and no floating-point reassociation or contraction.
Times are medians of nine trials with method order rotated between trials.

\subsection{Validity runtimes and HXT false negatives}
When recursive subdivision has not resolved validity, HXT's default guard,
\texttt{if (depth > 4) return false;}, stops the search and reports the hex
as invalid. In \autoref{tab:validity-throughput} the \emph{HXT depth cutoffs} column counts hexes rejected this way. The next column counts number of hexes for which this cutoff resulted in a false negative. This is verified doubly by our own method and by increasing the max depth in HXT.
In total, the guard rejects \SweepCutoffs{} of \SweepHexes{} inputs. Of these, \SweepFalseNegatives{} are valid.

HXT is faster on mildly perturbed, predominantly valid populations, though \textbf{Roots+B} is close behind.
\textbf{Roots+B} becomes faster at \(\sigma=0.5\), and \textbf{Roots} at
\(\sigma=0.8\). At high \(\sigma\), early rejection of invalid elements makes the two variants similarly fast. In general, all methods trend faster with more perturbed hexes due to earlier invalidity certificates.

\subsection{Minimum-search runtimes: different output problems}
Gmsh's \texttt{minDetJac} approximates the volume minimum by subdividing
the cube to tighten a Bernstein lower bound and a sampled upper
bound~\cite{gmshminsource}. Its stopping criterion requires the gap to be
below \(10^{-3}\) times the largest absolute sampled extremum, with
additional sign conditions. It returns the refined lower bound.
\autoref{tab:minimum-runtime} compares timing for their \texttt{minDetJac} routine with our measured boundary minimum.
Our boundary minimum and the volume minimum
agree for valid hexes by \autoref{thm:boundary-minimum}, but may differ
for invalid ones. 
Gmsh's default lower bound can be loose: among valid elements, it falls as much as
\(0.16123\) below our boundary minimum at \(\sigma=2.0\).
In a separate verification run, we tighten Gmsh's whole-volume bounding routine
to an absolute bracket tolerance of \(10^{-8}\).
Across all 400,179 valid elements, the largest gap between our boundary minimum
and Gmsh's tightented lower bound is \(9.975\times10^{-9}\), numerically confirming
boundary--volume minimum agreement to this accuracy.

\section{Conclusions}\label{sec:conclusions}
We prove Knupp's conjecture: strict boundary positivity guarantees strict
positivity throughout a trilinear hexahedron. Empirical observations have pointed to this conclusion for a long time but now we have a theoretical guarantee. Our followup results lead to a newly enabled practical algorithm for determining hexahedral validity that is competetive with current methods in both runtime and accuracy.

\setcounter{secnumdepth}{3}
\paragraph{Limitations.}
The method computes the boundary minimum, which need not equal the full
volume minimum for an invalid hexahedron; that value is unnecessary for
validity checking. The proofs assume exact arithmetic, while the
implementation uses double precision.
Agreement on this synthetic perturbation sweep
does not certify all floating-point inputs.

\clearpage

\bibliographystyle{eg-alpha-doi}
\bibliography{references}

\par\bigskip
\Needspace{12\baselineskip}
\appendix
\setcounter{secnumdepth}{3}
\section{Deferred proofs}\label{app:lemma-proofs}

\subsection{Proof of \autoref{lemma-1}}\label{proof-1}

\renewcommand{\restatedlemmaname}{\autoref{lemma-1} (restated)}
\begin{restatedlemma}[{Each connected component of \(Z\) is a compact smooth surface enclosing a bounded region}]

Under the boundary-positivity assumption, \(p\in Z \rightarrow \nabla J(p)\ne0\). Consequently, each connected component \(S\) of \(Z\) is a compact, smooth, two-dimensional embedded surface without boundary. The surface \(S\) is the boundary of a bounded three-dimensional region \(\Omega\subset\mathbb R^3\).

\end{restatedlemma}

\setcounter{lemma}{1}
\setcounter{sublemma}{0}

\begin{proof}

We first prove that a zero of \(J\) cannot also be a zero of \(\nabla J\). Suppose, for a contradiction, that a point \(p\in Q\) satisfies
\begin{equation}
J(p)=0,\qquad \nabla J(p)=0.\label{web-equation-06}
\end{equation}

Since \(J>0\) on \(\partial Q\), the point \(p\) lies strictly inside the cube.

Our information about positive values of \(J\) is on the cube boundary. To use that information at \(p\), consider the three coordinate squares through \(p\): each is obtained by fixing one reference coordinate at its value at \(p\) and allowing the other two coordinates to range over \([0,1]\). The four edges of each square lie on \(\partial Q\), so \(J\) is positive on each square's perimeter.

We want those positive perimeter values to contradict \(J(p)=0\). The following fact identifies a case in which positivity on the perimeter does force positivity throughout the square: the image of the square under \(F\) lies in a plane.

\begin{sublemma}\label{lemma-1a}

Let \(R\subset Q\) be a coordinate square obtained by fixing one reference coordinate. If \(F(R)\) is contained in an affine plane in physical space and \(J>0\) on the perimeter of \(R\), then \(J>0\) everywhere on \(R\).

\end{sublemma}

\noindent\emph{The proof of \autoref{lemma-1a} is in \autoref{proof-1a}.}

It follows that \textbf{none of the three coordinate squares through \(p\) can have a planar image under \(F\)}. If one did, \autoref{lemma-1a} would give \(J(p)>0\), contradicting our assumption \(J(p)=0\).

We have not yet used \(\nabla J(p)=0\). The next fact shows that this condition, together with \(J(p)=0\), forces exactly the planar slice we have just ruled out.

\begin{sublemma}[{\(J(p)=0\) and \(\nabla J(p)=0\) force a planar image for a coordinate slice through \(p\)}]\label{lemma-1b}

For any trilinear map \(F\), if \(p\in(0,1)^3\) satisfies \(J(p)=0\) and \(\nabla J(p)=0\), then at least one of the three coordinate squares through \(p\) has an image under \(F\) contained in an affine plane.

\end{sublemma}

\noindent\emph{The proof of \autoref{lemma-1b} is in \autoref{proof-1b}.}

\autoref{lemma-1b} produces a planar coordinate square through our hypothesized point \(p\). Its perimeter lies on \(\partial Q\), where \(J>0\). \autoref{lemma-1a} therefore gives \(J(p)>0\), contradicting \(J(p)=0\).

Thus no zero of \(J\) has zero gradient:
\begin{equation}
\boxed{\nabla J(q)\ne0\qquad\forall q\in Z.}\label{web-equation-30}
\end{equation}

Now we apply the \textbf{regular level set theorem} in the following scalar-valued form: let \(O\subset\mathbb R^3\) be open and let \(f:O\to\mathbb R\) be smooth. If \(\nabla f(q)\ne0\) at every \(q\in f^{-1}(\{0\})\), then \(f^{-1}(\{0\})\) is a smooth, two-dimensional embedded submanifold without boundary. See Corollary~5.14~\cite[p.~106]{lee2013}.

To apply this theorem, take \(O=(0,1)^3\) and \(f=J\) on \(O\). Boundary positivity ensures that \(Z\) has no points on \(\partial Q\), so
\begin{equation}
Z=\{q\in O:J(q)=0\}.\label{web-equation-31}
\end{equation}

The function \(J\) is smooth because it is a polynomial, and we have just proved \(\nabla J\ne0\) at every point of \(Z\). The theorem therefore makes \(Z\) a smooth, two-dimensional embedded submanifold without boundary, possibly disconnected. Each connected component \(S\) of \(Z\) is consequently a smooth embedded surface without boundary.

Since \(Z\) is closed in compact \(Q\), each connected component \(S\) is compact, so \(x\) attains its minimum and maximum on \(S\). As a compact, connected, smooth embedded surface without boundary in \(\mathbb R^3\), \(S\) bounds a bounded region \(\Omega\), whose outward unit normal we use.

\end{proof}

\subsection{Proof of \autoref{lemma-1a}}\label{proof-1a}

See \autoref{lemma-1a} for the statement.

\begin{proof}

Fix \(t\in[0,1]\) and let \(R=\{t\}\times[0,1]^2\). Assume that \(F(R)\) lies in an affine plane and that \(J>0\) on the perimeter of \(R\). We will prove that \(J>0\) throughout \(R\); the other two slice directions follow by cyclically permuting the coordinates. We first factor \(J\) on this slice. Choose a fixed unit normal \(n\) to the plane containing \(F(R)\). The vectors \(\frac{\partial F}{\partial y}(t,y,z)\) and \(\frac{\partial F}{\partial z}(t,y,z)\) are parallel to that plane, so their cross product is a scalar multiple of \(n\):
\begin{equation}
\begin{aligned}
J(t,y,z)
&=\det\left[\frac{\partial F}{\partial x}(t,y,z)\;\middle|\;
\frac{\partial F}{\partial y}(t,y,z)\;\middle|\;
\frac{\partial F}{\partial z}(t,y,z)\right]\\[6pt]
&=\frac{\partial F}{\partial x}(t,y,z)\cdot
\underbrace{\left(\frac{\partial F}{\partial y}(t,y,z)
\times\frac{\partial F}{\partial z}(t,y,z)\right)}_{=:N(y,z):=\ell(y,z)n}\\[6pt]
&=\ell(y,z)\,\underbrace{\left(\frac{\partial F}{\partial x}(t,y,z)\cdot n\right)}_{=:q(y,z)}\\[6pt]
&=\ell(y,z)\,q(y,z).
\end{aligned}\label{planar-slice-factorization}
\end{equation}

We will now show that \(\ell\) is affine and \(q\) is bilinear in \((y,z)\). These properties will let us prove positivity throughout the slice from positivity at its four corners.

To show \(\ell\) is affine, expand \(F\) on the fixed slice:
\begin{equation}
H(y,z):=F(t,y,z)=p_0+ay+bz+cyz.\label{web-equation-08}
\end{equation}

The coefficient vectors \(p_0,a,b,c\in\mathbb R^3\) are constant on this slice. The vectors \(a,b,c\) are parallel to the image plane, because each is a linear combination of differences of the four corner images.

Then we expand \(N\) as follows:
\begin{equation}
\begin{aligned}
N(y,z)&:=\frac{\partial F}{\partial y}(t,y,z)\times\frac{\partial F}{\partial z}(t,y,z)\\
&=\underbrace{\frac{\partial H}{\partial y}(y,z)}_{a+cz}
\times\underbrace{\frac{\partial H}{\partial z}(y,z)}_{b+cy}\\
&=(a+cz)\times(b+cy)\\
&=a\times b+y(a\times c)+z(c\times b)
+yz\underbrace{\cancel{(c\times c)}}_{0}\\
&=a\times b+y(a\times c)+z(c\times b).
\end{aligned}\label{web-equation-09}
\end{equation}

Since \(N(y,z)=\ell(y,z)n\) and \(n\) is a fixed unit vector,
\begin{equation}
\begin{aligned}
\ell(y,z)&=N(y,z)\cdot n\\
&=(a\times b)\cdot n+y(a\times c)\cdot n+z(c\times b)\cdot n.
\end{aligned}\label{affine-slice-factor}
\end{equation}

The coefficients of \(1,y,z\) in \autoref{affine-slice-factor} are constant, so \(\ell\) is affine.

The function \(q\) is bilinear in \((y,z)\), because \(F\) is trilinear and \(n\) is constant.

We must now show that both factors are positive. Since \(J=\ell q>0\) on the perimeter, \(\ell\) never vanishes there. The perimeter is connected and \(\ell\) is continuous, so \(\ell\) has one sign on the entire perimeter. Note, we can reverse the sign choice of \(n\) as needed to make \(\ell>0\) on the perimeter. Flipping the sign of \(n\) also flips the sign of \(q\) though so this does not change \(J=\ell q>0\) on the perimeter.

Since \(\ell\) is affine and positive at the four corners, \(\ell>0\) throughout the square. At the four corners, \(q=J/\ell>0\); since \(q\) is bilinear in the square's coordinates \((y,z)\), \(q>0\) throughout the square. Consequently \(J=\ell q>0\) throughout the square.

Cyclically permuting the reference coordinates proves the other two cases. A cyclic permutation preserves the determinant's sign, so the same factorization argument applies to a fixed-\(y\) or fixed-\(z\) slice.
\end{proof}

\subsection{Proof of \autoref{lemma-1b}}\label{proof-1b}

See \autoref{lemma-1b} for the statement.

\begin{proof}

Let \(p=(x_0,y_0,z_0)\in(0,1)^3\) satisfy \(J(p)=0\) and \(\nabla J(p)=0\). We will show that at least one of the three coordinate squares through \(p\) has a planar image under \(F\). To do so, we first express each slice using three vectors-valued coefficients; linear dependence of those vectors will place the entire slice image in a plane.

Define the following derivative vectors, all evaluated at the fixed point \(p\):
\begin{equation}
U:=\frac{\partial F}{\partial x}(p),\qquad V:=\frac{\partial F}{\partial y}(p),\qquad W:=\frac{\partial F}{\partial z}(p),\label{web-equation-11}
\end{equation}
\begin{equation}
A:=\frac{\partial^2F}{\partial x\,\partial y}(p),\qquad B:=\frac{\partial^2F}{\partial x\,\partial z}(p),\qquad C:=\frac{\partial^2F}{\partial y\,\partial z}(p).\label{web-equation-12}
\end{equation}

Let \(s,t\) denote displacements in the two coordinates allowed to vary on a slice. Since the restrictions of \(F\) to the slices are bilinear, their exact formulas are
\begin{equation}
F(x_0,y_0+s,z_0+t)=F(p)+Vs+Wt+Cst,\label{web-equation-13}
\end{equation}
\begin{equation}
F(x_0+s,y_0,z_0+t)=F(p)+Us+Wt+Bst,\label{web-equation-14}
\end{equation}
\begin{equation}
F(x_0+s,y_0+t,z_0)=F(p)+Us+Vt+Ast.\label{web-equation-15}
\end{equation}

These formulas hold for all \(s,t\) that keep the reference point in \(Q\). Each image lies in \(F(p)\) plus the span of its three coefficient vectors. Thus it is enough to show that at least one of the triples
\begin{equation}
(V,W,C),\qquad (U,W,B),\qquad (U,V,A)\label{web-equation-16}
\end{equation}

is linearly dependent.

\textbf{The condition \(J(p)=0\) makes \(U,V,W\) linearly dependent.} By definition,
\begin{equation}
0=J(p)=\det([U\mid V\mid W]).\label{web-equation-17}
\end{equation}

If \(V,W\) are dependent then the first slice formula already has planar image, and the proof is finished.

It remains to treat the case where \(V,W\) are independent. The zero determinant then forces \(U\) into the plane spanned by \(V,W\). Consequently there are unique scalars \(\alpha,\beta\) with
\begin{equation}
U=\alpha V+\beta W.\label{web-equation-18}
\end{equation}

These coefficients immediately identify two more cases in which a slice is planar. If \(\alpha=0\), then \(U,W\) are dependent, so the second slice formula has a planar image. If \(\beta=0\), then \(U,V\) are dependent, so the third slice formula has a planar image.

We are therefore left only with the case
\begin{equation}
V,W\text{ independent},\qquad \alpha\ne0,\qquad \beta\ne0.\label{web-equation-19}
\end{equation}

In this remaining case, we will prove that the first slice is planar by showing that \(C\) also lies in the plane spanned by \(V,W\). Since \(V,W\) are independent, this is equivalent to proving
\begin{equation}
\kappa:=\det([V\mid W\mid C])=0.\label{web-equation-20}
\end{equation}

We have used \(J(p)=0\). The remaining assumption is \(\nabla J(p)=0\). We first expand its \(y\)- and \(z\)-component equations and substitute \(U=\alpha V+\beta W\). We do this in two columns in parallel on the next page formatted specifically to demonstrate the parallels between these two steps.

\clearpage
\Needspace{43\baselineskip}
\begin{samepage}
\subsubsection*{Starting with \(\frac{\partial J}{\partial y}(p)=0\).}

We apply the determinant product rule \cite[Eq.~(2.3)]{grover2010} to expand \(\frac{\partial J}{\partial y}(p)\). Since \(F\) is trilinear, the pure second derivative \(\frac{\partial^2F}{\partial y^2}(p)=0\):
\begin{equation}
\begin{aligned}
&0=\frac{\partial J}{\partial y}(p)\\[4pt]
&\quad=\det\left[\underbrace{\frac{\partial^2F}{\partial x\,\partial y}(p)}_{A}\;\middle|\;\underbrace{\frac{\partial F}{\partial y}(p)}_{V}\;\middle|\;\underbrace{\frac{\partial F}{\partial z}(p)}_{W}\right]\\[6pt]
&\qquad+\det\left[\underbrace{\frac{\partial F}{\partial x}(p)}_{U}\;\middle|\;\underbrace{\frac{\partial^2F}{\partial y^2}(p)}_{0}\;\middle|\;\underbrace{\frac{\partial F}{\partial z}(p)}_{W}\right]\\[6pt]
&\qquad+\det\left[\underbrace{\frac{\partial F}{\partial x}(p)}_{U}\;\middle|\;\underbrace{\frac{\partial F}{\partial y}(p)}_{V}\;\middle|\;\underbrace{\frac{\partial^2F}{\partial y\,\partial z}(p)}_{C}\right]\\[6pt]
&\quad=\det([A\mid V\mid W])+\underbrace{\cancel{\det([U\mid 0\mid W])}}_{0}\\
&\qquad+\det([U\mid V\mid C])\\
&\quad=\det([A\mid V\mid W])+\det([U\mid V\mid C]).
\end{aligned}\label{web-equation-21}
\end{equation}

Now substitute \(U=\alpha V+\beta W\) into the second determinant:
\begin{equation}
\begin{aligned}
&\det([U\mid V\mid C])\\
&\quad=\det([\underbrace{\alpha V+\beta W}_{U}\mid V\mid C])\\
&\quad=\alpha\underbrace{\cancel{\det([V\mid V\mid C])}}_{0}
+\beta\det([W\mid V\mid C])\\
&\quad=\beta\det([W\mid V\mid C])\\
&\quad=-\beta\underbrace{\det([V\mid W\mid C])}_{\kappa}\\
&\quad=-\beta\kappa.
\end{aligned}\label{beta-column-determinant}
\end{equation}

Consequently,
\begin{equation}
\begin{aligned}
\frac{\partial J}{\partial y}(p)&=0=\det([A\mid V\mid W])-\beta\kappa,\\
\det([A\mid V\mid W])&=\beta\kappa.
\end{aligned}\label{y-derivative-determinant-value}
\end{equation}
\end{samepage}

\Needspace{43\baselineskip}
\begin{samepage}
\subsubsection*{Starting with \(\frac{\partial J}{\partial z}(p)=0\).}

We apply the determinant product rule \cite[Eq.~(2.3)]{grover2010} to expand \(\frac{\partial J}{\partial z}(p)\). Since \(F\) is trilinear, the pure second derivative \(\frac{\partial^2F}{\partial z^2}(p)=0\):
\begin{equation}
\begin{aligned}
&0=\frac{\partial J}{\partial z}(p)\\[4pt]
&\quad=\det\left[\underbrace{\frac{\partial^2F}{\partial x\,\partial z}(p)}_{B}\;\middle|\;\underbrace{\frac{\partial F}{\partial y}(p)}_{V}\;\middle|\;\underbrace{\frac{\partial F}{\partial z}(p)}_{W}\right]\\[6pt]
&\qquad+\det\left[\underbrace{\frac{\partial F}{\partial x}(p)}_{U}\;\middle|\;\underbrace{\frac{\partial^2F}{\partial y\,\partial z}(p)}_{C}\;\middle|\;\underbrace{\frac{\partial F}{\partial z}(p)}_{W}\right]\\[6pt]
&\qquad+\det\left[\underbrace{\frac{\partial F}{\partial x}(p)}_{U}\;\middle|\;\underbrace{\frac{\partial F}{\partial y}(p)}_{V}\;\middle|\;\underbrace{\frac{\partial^2F}{\partial z^2}(p)}_{0}\right]\\[6pt]
&\quad=\det([B\mid V\mid W])+\det([U\mid C\mid W])\\
&\qquad+\underbrace{\cancel{\det([U\mid V\mid 0])}}_{0}\\
&\quad=\det([B\mid V\mid W])+\det([U\mid C\mid W]).
\end{aligned}\label{web-equation-24}
\end{equation}

Now substitute \(U=\alpha V+\beta W\) into the second determinant:
\begin{equation}
\begin{aligned}
&\det([U\mid C\mid W])\\
&\quad=\det([\underbrace{\alpha V+\beta W}_{U}\mid C\mid W])\\
&\quad=\alpha\det([V\mid C\mid W])
+\beta\underbrace{\cancel{\det([W\mid C\mid W])}}_{0}\\
&\quad=\alpha\det([V\mid C\mid W])\\
&\quad=-\alpha\underbrace{\det([V\mid W\mid C])}_{\kappa}\\
&\quad=-\alpha\kappa.
\end{aligned}\label{alpha-column-determinant}
\end{equation}

Consequently,
\begin{equation}
\begin{aligned}
\frac{\partial J}{\partial z}(p)&=0=\det([B\mid V\mid W])-\alpha\kappa,\\
\det([B\mid V\mid W])&=\alpha\kappa.
\end{aligned}\label{z-derivative-determinant-value}
\end{equation}
\end{samepage}

\Needspace{32\baselineskip}
We now calculate \(\frac{\partial J}{\partial x}(p)\) in terms of \(\alpha,\beta,\kappa\). We will use \(\frac{\partial J}{\partial x}(p)=0\) after obtaining that formula.
\begin{equation}
\begin{aligned}
&\frac{\partial J}{\partial x}(p)\\[4pt]
&\quad=\det\left[\underbrace{\frac{\partial^2F}{\partial x^2}(p)}_{0}\;\middle|\;\underbrace{\frac{\partial F}{\partial y}(p)}_{V}\;\middle|\;\underbrace{\frac{\partial F}{\partial z}(p)}_{W}\right]\\[6pt]
&\qquad+\det\left[\underbrace{\frac{\partial F}{\partial x}(p)}_{U}\;\middle|\;\underbrace{\frac{\partial^2F}{\partial x\,\partial y}(p)}_{A}\;\middle|\;\underbrace{\frac{\partial F}{\partial z}(p)}_{W}\right]\\[6pt]
&\qquad+\det\left[\underbrace{\frac{\partial F}{\partial x}(p)}_{U}\;\middle|\;\underbrace{\frac{\partial F}{\partial y}(p)}_{V}\;\middle|\;\underbrace{\frac{\partial^2F}{\partial x\,\partial z}(p)}_{B}\right]\\[6pt]
&\quad=\underbrace{\cancel{\det([0\mid V\mid W])}}_{0}+\det([U\mid A\mid W])\\
&\qquad+\det([U\mid V\mid B])\\
&\quad=\det([U\mid A\mid W])+\det([U\mid V\mid B]).
\end{aligned}\label{x-derivative-expansion}
\end{equation}

Substitute \(U=\alpha V+\beta W\) in each determinant:
\begin{equation}
\begin{aligned}
&\det([U\mid A\mid W])\\
&=\det([\underbrace{\alpha V+\beta W}_{U}\mid A\mid W])\\
&=\alpha\det([V\mid A\mid W])
+\beta\underbrace{\cancel{\det([W\mid A\mid W])}}_{0}\\
&=-\alpha\det([A\mid V\mid W]),\\[8pt]
&\det([U\mid V\mid B])\\
&=\det([\underbrace{\alpha V+\beta W}_{U}\mid V\mid B])\\
&=\alpha\underbrace{\cancel{\det([V\mid V\mid B])}}_{0}
+\beta\det([W\mid V\mid B])\\
&=-\beta\det([B\mid V\mid W]).
\end{aligned}\label{x-derivative-column-substitutions}
\end{equation}

Using \autoref{y-derivative-determinant-value} and \autoref{z-derivative-determinant-value},
\begin{equation}
\begin{aligned}
\frac{\partial J}{\partial x}(p)
&=-\alpha\underbrace{\det([A\mid V\mid W])}_{\beta\kappa}
-\beta\underbrace{\det([B\mid V\mid W])}_{\alpha\kappa}\\
&=-\alpha(\beta\kappa)-\beta(\alpha\kappa)\\
&=-2\alpha\beta\kappa.
\end{aligned}\label{x-derivative-factorization}
\end{equation}

The derivation of \autoref{x-derivative-factorization} requires only trilinearity, the relation \(U=\alpha V+\beta W\), and \(\frac{\partial J}{\partial y}(p)=\frac{\partial J}{\partial z}(p)=0\). It therefore applies at any point satisfying those conditions. We have not used \(\frac{\partial J}{\partial x}(p)=0\) in deriving the identity, and no division by \(\alpha\) or \(\beta\) occurred, so the identity also holds when either coefficient is zero.

For \autoref{lemma-1b}, we now use the additional assumption \(\frac{\partial J}{\partial x}(p)=0\). \autoref{x-derivative-factorization} gives \(0=-2\alpha\beta\kappa\). We are in the case where \(\alpha\ne0\) and \(\beta\ne0\), so \(\kappa=0\). Hence \(V,W,C\) are linearly dependent. The first slice formula places the image of the fixed-\(x\) square in an affine plane.

Every case has produced a planar coordinate slice through \(p\). This proves \autoref{lemma-1b}.
\end{proof}

\subsection{Proof of \autoref{lemma-2}}\label{proof-2}

\renewcommand{\restatedlemmaname}{\autoref{lemma-2} (restated)}
\begin{restatedlemma}[{On \(S\), \(\frac{\partial J}{\partial x}\) has one nonzero sign wherever \(\frac{\partial J}{\partial y}=\frac{\partial J}{\partial z}=0\)}]

Under the boundary-positivity assumption, let \(S\) be a connected component of \(Z\). Suppose \(p,q\in S\) satisfy
\begin{equation}
\begin{aligned}
\frac{\partial J}{\partial y}(p)&=\frac{\partial J}{\partial z}(p)=0,\\
\frac{\partial J}{\partial y}(q)&=\frac{\partial J}{\partial z}(q)=0.
\end{aligned}\label{restated-web-equation-37}
\end{equation}

\Needspace{4\baselineskip}
Then
\begin{equation}
\boxed{\frac{\partial J}{\partial x}(p)\frac{\partial J}{\partial x}(q)>0.}\label{restated-web-equation-38}
\end{equation}

\end{restatedlemma}

\setcounter{lemma}{2}
\setcounter{sublemma}{0}

\begin{proof}

We reuse the notation introduced earlier:
\begin{equation}
U=\frac{\partial F}{\partial x},\qquad
V=\frac{\partial F}{\partial y},\qquad
W=\frac{\partial F}{\partial z},\qquad
C=\frac{\partial^2F}{\partial y\,\partial z}.
\label{surface-derivative-definitions}
\end{equation}

To compare the signs of \(\frac{\partial J}{\partial x}\) at \(p,q\), we will reuse \autoref{x-derivative-factorization} from the proof of \autoref{lemma-1b} in \autoref{proof-1b}. That identity requires us to establish the relation \(U(r)=\alpha(r)V(r)+\beta(r)W(r)\) first.

Since \(J(r)=0\) on \(S\), independence of \(V(r),W(r)\) will force \(U(r)\) into their span. The following statement establishes this independence at every point of \(S\).

\begin{sublemma}[{Any two columns of \(DF(r)\) are independent for every \(r\in Z\)}]\label{lemma-2a}

Under the boundary-positivity assumption, every pair of columns of \(DF(r)\) is linearly independent at every \(r\in Z\).

\end{sublemma}

\noindent\emph{The proof of \autoref{lemma-2a} is in \autoref{proof-2a}.}

Since \(J(r)=0\) and no pair among \(U(r),V(r),W(r)\) is dependent (\autoref{lemma-2a}), there are unique nonzero scalars \(\alpha(r),\beta(r)\) such that
\begin{equation}
U(r)=\alpha(r)V(r)+\beta(r)W(r)\qquad(r\in S).\label{column-dependence}
\end{equation}

Now fix any \(r\in S\) with \(\frac{\partial J}{\partial y}(r)=\frac{\partial J}{\partial z}(r)=0\). Every condition used to derive \autoref{x-derivative-factorization} is satisfied:

\begin{itemize}

\item \(F\) is trilinear, by assumption.

\item \(U(r)=\alpha(r)V(r)+\beta(r)W(r)\), by \autoref{column-dependence}.

\item \(\frac{\partial J}{\partial y}(r)=\frac{\partial J}{\partial z}(r)=0\), by the choice of \(r\).

\end{itemize}

With \(\kappa(r)=\det([V(r)\mid W(r)\mid C(r)])\), \autoref{x-derivative-factorization} therefore gives
\begin{equation}
\frac{\partial J}{\partial x}(r)=-2\alpha(r)\beta(r)\kappa(r).\label{surface-derivative-factorization}
\end{equation}

This application does not assume \(\frac{\partial J}{\partial x}(r)=0\). The identity holds at every \(r\in S\) with the two stated zero partial derivatives, including the points \(p,q\) in \autoref{lemma-2}.

To complete the sign comparison, we need \(\alpha,\beta,\kappa\) to be continuous and nonzero everywhere on \(S\). \autoref{lemma-2a} and \autoref{column-dependence} already ensure that \(\alpha,\beta\) never vanish. We next prove that \(\kappa\) never vanishes.

\begin{sublemma}[{\(\kappa(r)\ne0\) for every \(r\in S\)}]\label{lemma-2b}

Under the boundary-positivity assumption,
\begin{equation}
\kappa(r)=\det([V(r)\mid W(r)\mid C(r)])\ne0\qquad\forall r\in S.\label{web-equation-42}
\end{equation}

\end{sublemma}

\noindent\emph{The proof of \autoref{lemma-2b} is in \autoref{proof-2b}.}

To prove continuity of \(\alpha\) and \(\beta\), we express each function as a quotient with the nonzero denominator \(\kappa\).

We start from \autoref{alpha-column-determinant} and \autoref{beta-column-determinant}, whose only assumptions are the definition \(\kappa(r)=\det([V(r)\mid W(r)\mid C(r)])\) and the relation \(U(r)=\alpha(r)V(r)+\beta(r)W(r)\). Both conditions have already been established, with the relation for \(U\) given in \autoref{column-dependence}:
\begin{equation}
\begin{aligned}
\det([U(r)\mid W(r)\mid C(r)])
&=-\det([U(r)\mid C(r)\mid W(r)])\\
&=\alpha(r)\kappa(r).
\end{aligned}\label{alpha-determinant-expansion}
\end{equation}
\begin{equation}
\begin{aligned}
\det([V(r)\mid U(r)\mid C(r)])
&=-\det([U(r)\mid V(r)\mid C(r)])\\
&=\beta(r)\kappa(r).
\end{aligned}\label{beta-determinant-expansion}
\end{equation}

Since \(\kappa(r)\ne0\) on \(S\), division gives the following formulas for \(\alpha(r)\) and \(\beta(r)\):
\begin{equation}
\begin{aligned}
\alpha(r)&=\frac{\det([U(r)\mid W(r)\mid C(r)])}{\kappa(r)},\\[4pt]
\beta(r)&=\frac{\det([V(r)\mid U(r)\mid C(r)])}{\kappa(r)}.
\end{aligned}\label{coefficient-quotients}
\end{equation}

Every vector \(U,V,W,C\) is a polynomial function of \(r\). Hence \(\kappa\) is continuous, and \autoref{coefficient-quotients} makes \(\alpha,\beta\) continuous on \(S\). We also have \(\alpha(r)\ne0\) and \(\beta(r)\ne0\) by \autoref{lemma-2a} and \autoref{column-dependence}, and \(\kappa(r)\ne0\) by \autoref{lemma-2b}, for every \(r\in S\). Since \(S\) is connected, each of \(\alpha,\beta,\kappa\) has one fixed nonzero sign on \(S\).

Finally, \autoref{surface-derivative-factorization} applies at both \(p\) and \(q\):
\begin{equation}
\begin{aligned}
\frac{\partial J}{\partial x}(p)&=-2\alpha(p)\beta(p)\kappa(p),\\
\frac{\partial J}{\partial x}(q)&=-2\alpha(q)\beta(q)\kappa(q).
\end{aligned}\label{web-equation-47}
\end{equation}

Each factor has the same nonzero sign at \(p\) and \(q\). Thus the two derivatives have the same nonzero sign, proving
\begin{equation}
\frac{\partial J}{\partial x}(p)\frac{\partial J}{\partial x}(q)>0.\qedhere\label{web-equation-48}
\end{equation}
\end{proof}

\subsection{Proof of \autoref{lemma-2a}}\label{proof-2a}

See \autoref{lemma-2a} for the statement.

\begin{proof}

Fix \(r=(x_0,y_0,z_0)\in Z\). To see what dependence of \(V(r),W(r)\) would imply for the fixed-\(x\) slice, write its exact bilinear formula:
\begin{equation}
\begin{aligned}
F(x_0,y_0+s,z_0+t)
={}&F(r)+V(r)s+W(r)t\\
&+\underbrace{\frac{\partial^2F}{\partial y\,\partial z}(r)}_{=:C(r)}\,st.
\end{aligned}\label{fixed-x-slice-expansion}
\end{equation}

The formula holds for all displacements \(s,t\) that keep the reference point in \(Q\). If \(V(r),W(r)\) were dependent, their span would have dimension at most one. Adding the single vector \(C(r)\) would raise that dimension by at most one. The entire slice image would therefore lie in an affine plane through \(F(r)\).

By \autoref{lemma-1a}, a coordinate square with a planar image and positive \(J\) on its perimeter has positive \(J\) throughout the square. Here the perimeter lies on \(\partial Q\), where \(J>0\), but the square contains \(r\), where \(J(r)=0\). This contradiction proves that \(V(r),W(r)\) are independent.

Cyclically permuting the reference coordinates gives the same conclusion for any pair among \(U,V,W\).
\end{proof}

\subsection{Proof of \autoref{lemma-2b}}\label{proof-2b}

See \autoref{lemma-2b} for the statement.

\begin{proof}

Fix \(r=(x_0,y_0,z_0)\in S\). We repeat the fixed-\(x\) slice formula \autoref{fixed-x-slice-expansion} here for completeness:
\begin{equation}
\begin{aligned}
F(x_0,y_0+s,z_0+t)
={}&F(r)+V(r)s+W(r)t\\
&+\underbrace{\frac{\partial^2F}{\partial y\,\partial z}(r)}_{=C(r)}\,st.
\end{aligned}\label{web-equation-43}
\end{equation}

If \(\kappa(r)=0\), the three coefficient vectors \(V(r),W(r),C(r)\) would span at most two dimensions, so the whole slice image would lie in an affine plane through \(F(r)\). The perimeter lies on \(\partial Q\), where \(J>0\). By \autoref{lemma-1a}, positivity would extend throughout the slice, contradicting \(J(r)=0\). Hence \(\kappa(r)\ne0\).
\end{proof}

\subsection{Proof of \autoref{thm:boundary-minimum}}\label{proof:boundary-minimum}

\renewcommand{\restatedlemmaname}{\autoref{thm:boundary-minimum} (restated)}
\begin{restatedlemma}[Boundary minimum]
If \(J>0\) on \(\partial Q\), then
\begin{equation}
\min_{r\in Q}J(r)=\min_{r\in\partial Q}J(r)>0.
\label{eq:boundary-minimum-restated}
\end{equation}
\end{restatedlemma}

\begin{proof}
By \autoref{thm:knupp-conjecture}, \(J>0\) throughout \(Q\). Since \(J\) is continuous and \(Q\) is compact, \(J\) attains a positive global minimum. If a global minimizer lies on \(\partial Q\), the conclusion already holds. To handle an interior global minimizer, we will show that every positive interior local minimum extends unchanged along a coordinate-line segment to the boundary.

Let \(p\in(0,1)^3\) be a local minimizer with \(J(p)>0\). We will expand \(J\) along the three coordinate lines through \(p\). Each expansion is an exact polynomial of degree at most two. To make one of these polynomials constant, we must show that its linear and quadratic coefficients both vanish.

At an interior local minimum, the gradient vanishes and the Hessian (the matrix of second partial derivatives) is positive semidefinite:
\begin{equation}
\nabla J(p)=0,\qquad \nabla^2J(p)\succeq0.\label{local-minimum-conditions}
\end{equation}

The gradient condition will make all three linear coefficients zero. The Hessian condition will make all three quadratic coefficients nonnegative. We will then use the determinant formulas for those coefficients to prove that at least one must be zero.

To compute the coordinate-line polynomials, evaluate the first and mixed second derivatives at the fixed point \(p\):
\begin{equation}
\begin{aligned}
U&=\frac{\partial F}{\partial x}(p),&
V&=\frac{\partial F}{\partial y}(p),&
W&=\frac{\partial F}{\partial z}(p),\\[4pt]
A&=\frac{\partial^2F}{\partial x\,\partial y}(p),&
B&=\frac{\partial^2F}{\partial x\,\partial z}(p),&
C&=\frac{\partial^2F}{\partial y\,\partial z}(p).
\end{aligned}
\label{minimum-derivative-definitions}
\end{equation}
For a displacement \(h=(h_1,h_2,h_3)\), trilinearity gives the exact expansion
\begin{equation}
\begin{aligned}
F(p+h)&=F(p)+Uh_1+Vh_2+Wh_3\\
&\quad+Ah_1h_2+Bh_1h_3+Ch_2h_3\\
&\quad+\frac{\partial^3F}{\partial x\,\partial y\,\partial z}(p)\,h_1h_2h_3.
\end{aligned}\label{trilinear-expansion-at-minimum}
\end{equation}

Let \(e_1,e_2,e_3\) be the standard coordinate vectors. Differentiating \autoref{trilinear-expansion-at-minimum} with respect to all three displacement coordinates and then evaluating on each coordinate line gives
\begin{equation}
\begin{aligned}
DF(p+te_1)&=[U\mid V+tA\mid W+tB],\\[4pt]
DF(p+te_2)&=[U+tA\mid V\mid W+tC],\\[4pt]
DF(p+te_3)&=[U+tB\mid V+tC\mid W].
\end{aligned}\label{coordinate-line-derivative-matrices}
\end{equation}

Expanding each determinant by linearity in its columns gives the constant, linear, and quadratic coefficients explicitly:
\begin{equation}
\begin{aligned}
J(p+te_1)&=\det([U\mid V+tA\mid W+tB])\\
&=\underbrace{\det([U\mid V\mid W])}_{J(p)}\\
&\quad+t\bigl(\det([U\mid A\mid W])+\det([U\mid V\mid B])\bigr)\\
&\quad+t^2\underbrace{\det([U\mid A\mid B])}_{=:q_1}.
\end{aligned}\label{x-axis-jacobian-polynomial}
\end{equation}
\begin{equation}
\begin{aligned}
J(p+te_2)&=\det([U+tA\mid V\mid W+tC])\\
&=\underbrace{\det([U\mid V\mid W])}_{J(p)}\\
&\quad+t\bigl(\det([A\mid V\mid W])+\det([U\mid V\mid C])\bigr)\\
&\quad+t^2\underbrace{\det([A\mid V\mid C])}_{=:q_2}.
\end{aligned}\label{y-axis-jacobian-polynomial}
\end{equation}
\begin{equation}
\begin{aligned}
J(p+te_3)&=\det([U+tB\mid V+tC\mid W])\\
&=\underbrace{\det([U\mid V\mid W])}_{J(p)}\\
&\quad+t\bigl(\det([B\mid V\mid W])+\det([U\mid C\mid W])\bigr)\\
&\quad+t^2\underbrace{\det([B\mid C\mid W])}_{=:q_3}.
\end{aligned}\label{z-axis-jacobian-polynomial}
\end{equation}

Differentiating these polynomials at \(t=0\) picks out their linear coefficients. The condition \(\nabla J(p)=0\) therefore gives
\begin{equation}
\begin{aligned}
0=\frac{\partial J}{\partial x}(p)
&=\det([U\mid A\mid W])+\det([U\mid V\mid B]),\\[6pt]
0=\frac{\partial J}{\partial y}(p)
&=\det([A\mid V\mid W])+\det([U\mid V\mid C]),\\[6pt]
0=\frac{\partial J}{\partial z}(p)
&=\det([B\mid V\mid W])+\det([U\mid C\mid W]).
\end{aligned}\label{axis-linear-coefficients-zero}
\end{equation}

Differentiating twice gives the diagonal entries of the Hessian. Positive semidefiniteness makes each diagonal entry nonnegative:
\begin{equation}
\begin{aligned}
q_1&=\frac12\frac{\partial^2J}{\partial x^2}(p)\ge0,\\
q_2&=\frac12\frac{\partial^2J}{\partial y^2}(p)\ge0,\\
q_3&=\frac12\frac{\partial^2J}{\partial z^2}(p)\ge0.
\end{aligned}\label{nonnegative-axis-curvatures}
\end{equation}

\autoref{nonnegative-axis-curvatures} gives \(q_i\ge0\), and \(J(p)>0\). Thus it is enough to show that \(J(p)q_i\le0\) for at least one index \(i\): for that index,
\begin{equation}
0\le J(p)q_i\le0\qquad\Longrightarrow\qquad q_i=0.\label{axis-curvature-zero-target}
\end{equation}

Multiplying by the positive factor \(J(p)\) preserves the sign of \(q_i\) and lets us use the following identity for a product of two determinants. We will apply the identity to all three products \(J(p)q_i\), simplify using \autoref{axis-linear-coefficients-zero}, and compare the resulting expressions to show that at least one is nonpositive.

\begin{derivation}[{A product identity for determinants with a common column}]\label{derivation:determinant-identity}

For any vectors \(u,v,w,a,b\in\mathbb R^3\),
\begin{equation}
\begin{aligned}
&\det([u\mid v\mid w])\det([u\mid a\mid b])\\
&\qquad=\det([u\mid a\mid w])\det([u\mid v\mid b])\\
&\qquad\quad-\det([u\mid v\mid a])\det([u\mid b\mid w]).
\end{aligned}\label{common-column-determinant-identity}
\end{equation}

\end{derivation}

\noindent\emph{\autoref{derivation:determinant-identity} is expanded in \autoref{app:determinant-expansion}.}

Apply \autoref{derivation:determinant-identity} to the three products \(J(p)q_i\). The same three determinants recur across the calculations; denote them by
\begin{equation}
\begin{aligned}
s_1&:=\det([U\mid V\mid A]),\\
s_2&:=\det([U\mid B\mid W]),\\
s_3&:=\det([C\mid V\mid W]).
\end{aligned}\label{shared-curvature-determinants}
\end{equation}

Cyclic permutations of three columns preserve the determinant. Each calculation below specifies the vectors used in \autoref{derivation:determinant-identity}, then substitutes the corresponding equation from \autoref{axis-linear-coefficients-zero}.

\Needspace{15\baselineskip}
\begin{samepage}
\subsubsection*{\(J(p)q_1\)}

Take \((u,v,w,a,b)=(U,V,W,A,B)\), then use the \(x\)-equation in \autoref{axis-linear-coefficients-zero}.
\begin{equation}
\begin{aligned}
&J(p)q_1\\[4pt]
&=\det([U\mid A\mid W])
\underbrace{\det([U\mid V\mid B])}_{-\det([U\mid A\mid W])}\\
&\quad-\underbrace{\det([U\mid V\mid A])}_{s_1}
\underbrace{\det([U\mid B\mid W])}_{s_2}\\[4pt]
&=-\det([U\mid A\mid W])^2-s_1s_2.
\end{aligned}\label{x-axis-curvature-factorization}
\end{equation}
\end{samepage}

\Needspace{15\baselineskip}
\begin{samepage}
\subsubsection*{\(J(p)q_2\)}

Take \((u,v,w,a,b)=(V,W,U,C,A)\), then use the \(y\)-equation in \autoref{axis-linear-coefficients-zero}.
\begin{equation}
\begin{aligned}
&J(p)q_2\\[4pt]
&=\det([A\mid V\mid W])
\underbrace{\det([U\mid V\mid C])}_{-\det([A\mid V\mid W])}\\
&\quad-\underbrace{\det([U\mid V\mid A])}_{s_1}
\underbrace{\det([C\mid V\mid W])}_{s_3}\\[4pt]
&=-\det([A\mid V\mid W])^2-s_1s_3.
\end{aligned}\label{y-axis-curvature-factorization}
\end{equation}
\end{samepage}

\Needspace{15\baselineskip}
\begin{samepage}
\subsubsection*{\(J(p)q_3\)}

Take \((u,v,w,a,b)=(W,U,V,B,C)\), then use the \(z\)-equation in \autoref{axis-linear-coefficients-zero}.
\begin{equation}
\begin{aligned}
&J(p)q_3\\[4pt]
&=\det([B\mid V\mid W])
\underbrace{\det([U\mid C\mid W])}_{-\det([B\mid V\mid W])}\\
&\quad-\underbrace{\det([U\mid B\mid W])}_{s_2}
\underbrace{\det([C\mid V\mid W])}_{s_3}\\[4pt]
&=-\det([B\mid V\mid W])^2-s_2s_3.
\end{aligned}\label{z-axis-curvature-factorization}
\end{equation}
\end{samepage}

The three products \(s_1s_2,s_1s_3,s_2s_3\) cannot all be strictly negative, because
\begin{equation}
(s_1s_2)(s_1s_3)(s_2s_3)=(s_1s_2s_3)^2\ge0.\label{web-equation-64}
\end{equation}

Choose a nonnegative product out of the three. The corresponding formula for \(J(p)q_i\) is a negative square minus that nonnegative product, so \(J(p)q_i\le0\). But \autoref{nonnegative-axis-curvatures} gives \(q_i\ge0\), and \(J(p)>0\). Consequently
\begin{equation}
0\le J(p)q_i\le0\qquad\Longrightarrow\qquad q_i=0.\label{web-equation-65}
\end{equation}

On this coordinate line, the linear coefficient is already zero by \autoref{axis-linear-coefficients-zero}. With the quadratic coefficient now also zero, the exact coordinate-line polynomial reduces to
\begin{equation}
\begin{aligned}
J(p+te_i)&=J(p)+t\underbrace{\bigl(\nabla J(p)\cdot e_i\bigr)}_{0}
+t^2\underbrace{q_i}_{0}\\
&=J(p).
\end{aligned}\label{constant-minimum-coordinate-line}
\end{equation}

No higher-degree terms remain: the coordinate-line formulas were exact polynomial identities. Thus the equality holds for every \(t\) with \(p+te_i\in Q\), along the entire coordinate-line segment through \(p\). Both endpoints of that segment lie on \(\partial Q\). Applying this conclusion to an interior global minimizer gives a boundary point with the same minimum value. Thus the minimum over \(Q\) is attained on \(\partial Q\).
\end{proof}

\subsection{Expansion of \autoref{derivation:determinant-identity}}\label{app:determinant-expansion}

See \autoref{derivation:determinant-identity} for the statement.
The code below performs that exact calculation with SymPy. 

\begin{lstlisting}[language=Python,breaklines=true]
import sympy as sp


def verify_common_column_identity():
    """Check Sub-lemma 3a, including when det([u | v | w]) equals zero."""
    # These five vectors have 15 independent symbolic real coordinates.
    # No coordinate or determinant is assumed to be nonzero.
    vectors = [
        sp.Matrix(sp.symbols(f"{name}1:4", real=True))
        for name in ("u", "v", "w", "a", "b")
    ]
    u, v, w, a, b = vectors

    # Form the matrices explicitly, then apply the determinant to each matrix.
    lhs = (
        sp.Matrix.hstack(u, v, w).det()
        * sp.Matrix.hstack(u, a, b).det()
    )
    rhs = (
        sp.Matrix.hstack(u, a, w).det()
        * sp.Matrix.hstack(u, v, b).det()
        - sp.Matrix.hstack(u, v, a).det()
        * sp.Matrix.hstack(u, b, w).det()
    )

    # Both sides are polynomials. Exact coefficient cancellation proves the
    # identity for every choice of coordinates, including dependent u, v, w.
    # No division or nonzero-determinant assumption is needed.
    parameters = tuple(entry for vector in vectors for entry in vector)
    residual = sp.Poly(sp.expand(lhs - rhs), *parameters)
    print("\nSub-lemma 3a: common-column determinant identity.")
    print("All 15 vector coordinates are symbolic; no invertibility assumption.")
    print(f"Expanded LHS - RHS = {residual.as_expr()}")
    if not residual.is_zero:
        raise SystemExit("FAIL: the determinant identity has a nonzero residual.")
    print("PASS: the identity holds even when det([u | v | w]) = 0.")


verify_common_column_identity()
\end{lstlisting}

The code prints \texttt{Expanded LHS - RHS = 0} and reports success. Thus both sides are the same polynomial, including when \(\det([u\mid v\mid w])=0\).

\subsection{Expansion of the quartic polynomial}\label{app:quartic-expansion}
The full calculation behind \autoref{face-quartic-elimination} is given below. At an interior stationary point with \(a(t)>0\), substitute \autoref{face-recovered-coordinate} into \autoref{face-stationary-t}, multiply by \(4a(t)^2\), and expand the coefficient functions from \autoref{face-face-form}:
\begin{equation}
\begin{aligned}
0&=a'(t)\underbrace{\left(-\frac{b(t)}{2a(t)}\right)^2}_{s^2}\\
&\quad+b'(t)\underbrace{\left(-\frac{b(t)}{2a(t)}\right)}_{s}+c'(t),\\[4pt]
0&=a'(t)b(t)^2-2a(t)b(t)b'(t)\\
&\quad+4a(t)^2c'(t)\\[6pt]
&=\underbrace{a_1}_{a'(t)}
\underbrace{\left[\begin{aligned}
&b_0^2+2b_0b_1t+(b_1^2+2b_0b_2)t^2\\
&\quad+2b_1b_2t^3+b_2^2t^4
\end{aligned}\right]}_{b(t)^2}\\[6pt]
&\quad-2\underbrace{(a_0+a_1t)}_{a(t)}
\underbrace{(b_0+b_1t+b_2t^2)}_{b(t)}
\underbrace{(b_1+2b_2t)}_{b'(t)}\\[6pt]
&\quad+4\underbrace{(a_0^2+2a_0a_1t+a_1^2t^2)}_{a(t)^2}
\underbrace{(c_1+2c_2t)}_{c'(t)}\\[6pt]
&=(a_1b_0^2-2a_0b_0b_1+4a_0^2c_1)\\
&\quad+\bigl[8a_0^2c_2+8a_0a_1c_1\\
&\qquad\quad-4a_0b_0b_2-2a_0b_1^2\bigr]t\\
&\quad+\bigl[16a_0a_1c_2-6a_0b_1b_2+4a_1^2c_1\\
&\qquad\quad-2a_1b_0b_2-a_1b_1^2\bigr]t^2\\
&\quad+(8a_1^2c_2-4a_0b_2^2-4a_1b_1b_2)t^3\\
&\quad-3a_1b_2^2t^4\\
&=:P(t).
\end{aligned}\label{face-quartic-full-expansion}
\end{equation}

\Needspace{36\baselineskip}
\begin{samepage}
Alternatively, this code verifies the degree of $P(t)$.

\begin{lstlisting}[language=Python,breaklines=true]
"""Verify the degree bound in equation 10 of quartic.html."""

import sympy as sp

s, t = sp.symbols("s t")
a0, a1, b0, b1, b2, c0, c1, c2 = sp.symbols(
    "a0 a1 b0 b1 b2 c0 c1 c2"
)

a = a0 + a1*t
b = b0 + b1*t + b2*t**2
c = c0 + c1*t + c2*t**2
f = a*s**2 + b*s + c

# Differentiate before substituting s = -b/(2*a).
# The substitution requires a(t) != 0; the proof uses it where a(t) > 0.
f_t = sp.diff(f, t)
substituted = f_t.subs(s, -b/(2*a))

# Clear the denominator as in equation 10, then collect powers of t.
P = sp.Poly(sp.cancel(4*a**2*substituted), t)
print("Monomials of P(t):", [t**power for (power,) in P.monoms()])
assert P.degree() <= 4
\end{lstlisting}

The script prints \texttt{Monomials of P(t): [t**4, t**3, t**2, t, 1]} and verifies that \(P\) has degree at most four.
\end{samepage}

\end{document}